\documentclass[a4paper,10pt]{amsart}
\usepackage[utf8]{inputenc}
\usepackage{amsthm}
\usepackage{amsmath}
\usepackage{bm}
\usepackage{amsfonts}
\usepackage{amssymb}
\usepackage{mathtools}
\usepackage{mathrsfs}
\usepackage{setspace}
\usepackage{xcolor}
\usepackage{textgreek}
\usepackage{todonotes}
\usepackage{thmtools} 

\usepackage[pdfdisplaydoctitle,colorlinks,breaklinks,urlcolor=blue,linkcolor=blue,citecolor=blue]{hyperref} 

\newtheorem{theorem}{Theorem}[section]

\newtheorem{definition}[theorem]{Definition}

\newtheorem{lemma}[theorem]{Lemma}

\newtheorem{proposition}[theorem]{Proposition}

\newcommand{\uu}{\mathbf{u}}
\newcommand{\vv}{\mathbf{v}}

\newcommand{\N}{\mathbb{N}}
\newcommand{\R}{\mathbb{R}}
\newcommand{\T}{\mathbb{T}}
\newcommand{\Z}{\mathbb{Z}}

\newcommand{\C}{\mathbb{C}}
\newcommand{\PP}{\mathbb{P}}
\newcommand{\dvg}{\mbox{div}\,}
\newcommand{\E}[1]{\mathbb{E}\left[#1\right]}

\theoremstyle{remark}
\newtheorem{remark}[theorem]{Remark}

\numberwithin{equation}{section}



\title[Quantitative Mixing and dissipation enhancement OU]{Quantitative mixing and dissipation enhancement property of Ornstein-Uhlenbeck flow}
\author[U. Pappalettera]{Umberto Pappalettera}
  \address{Scuola Normale Superiore, Piazza dei Cavalieri, 7, 56126 Pisa, Italia}
  \email{\href{mailto:umberto.pappalettera@sns.it}{umberto.pappalettera@sns.it}}
\keywords{advection-diffusion equations, mixing, dissipation enhancement, Ornstein-Uhlenbeck process}
\date\today

\begin{document}

\begin{abstract}
This work deals with mixing and dissipation enhancement for the solution of advection-diffusion equation driven by a Ornstein-Uhlenbeck velocity field. 
We are able to prove a quantitative mixing result, uniform in the diffusion parameter, and enhancement of dissipation over a finite time horizon. 
\end{abstract}

\maketitle

\section{Introduction}

Let $\T^d = \R^d / 2\pi \Z^d$ be the $d$-dimensional torus, $d\geq 3$.
In this paper we study dissipation enhancement and mixing in the advection-diffusion of a passive scalar $T(t,x)$, given by the following partial differential equation:
\begin{align} \label{eq:temp}
\partial_{t}T + \uu \cdot \nabla T  = \kappa \Delta T
\quad \mbox{ in } [0,1] \times \T^d,
\end{align}
with initial value $T |_{t=0} = T_0 \in L^2(\T^d)$.
In the equation above, $\uu(t,x)$ is a random divergence-free velocity field which models in a very simplified fashion an incompressible turbulent fluid, and the parameter $\kappa \in [0,1]$ is called \emph{molecular diffusivity}.

Roughly speaking, our aim is to prove that the effective diffusion in \eqref{eq:temp} is enhanced due to the random turbulent transport $\uu$, and for a suitable choice of the velocity field $\uu$ we are able to provide a quantitative mixing estimate, that is a bound in a space of distributions on $\T^d$ for the difference between the (random) solution $T$ of \eqref{eq:temp} and the (deterministic) solution $\bar{T}$ of
\begin{align} \label{eq:temp_bar}
\partial_t \bar{T} = (\kappa \Delta + \mathcal{L}) \bar{T}
\quad \mbox{ in } [0,1] \times \T^d,
\end{align}
with initial value $ \bar T |_{t=0} = T_0$, where $\mathcal{L}$ is the second-order negative-semidefinite operator, usually called \emph{eddy diffusivity}, rigorously defined by \eqref{eq:def_L} below.

A similar problem has recently been investigated in \cite{FlGaLu21}, where a Gaussian,  white-in-time velocity field $\uu$ in a bounded domain $D \subset \R^d$ has been considered, interpreting the advection term $\uu \cdot \nabla T$ as a Stratonovich stochastic integral and imposing Dirichlet boundary conditions on the solution. In that work, the authors are able to prove the following estimate:
\begin{align*}
\E{
\sup_{t \in [0,1]}
\left(
\int_D T(t,x) \phi(x) - \int_D \bar{T}(t,x) \phi(x)
\right)^2}
\leq
\frac{\epsilon_Q}{2\kappa} \|T_0\|_{L^2(D)}^2 \|\phi\|_{L^\infty(D)}^2,
\end{align*}
where $\phi \in L^\infty(D)$ is a test function and the quantity $\epsilon_Q$ is a small parameter related to the spatial covariance of $\uu$.
Their result is based on a scaling limit procedure first introduced in \cite{Ga20}, where it is shown how a suitable sequence of Stratonovich transport noises added to a hyperbolic transport equation can produce a parabolic equation in the limit, see also \cite{FlLu20,FlGaLu21b,FlGaLu21c}.

However, the noise considered in \cite{FlGaLu21} is arguably not physical. 
As already discussed in \cite[Section 4]{MaKr99}, the assumption of the velocity field $\uu$ being white-in-time is far from realistic in actual turbulent fluids. 
One main problem is that such a $\uu$ is not even a function -- it only takes values in a space of distributions.

The main motivation behind this work is to overcome this issue by replacing the white-in-time velocity field considered in \cite{FlGaLu21} with a velocity field $\uu^\alpha$, $\alpha>1$, with finite correlation time of the following form
\begin{align*}
\uu^\alpha(t,x) \coloneqq \sum_{j \in J} \uu_j(x) \eta^{\alpha,j}(t),
\end{align*}
where $J$ is an index set, $\{\uu_j\}_{j \in J}$ is a family of suitable time-independent vector fields, and $\{\eta^{\alpha,j}\}_{j \in J}$ is a family of i.i.d. stationary Ornstein-Uhlenbeck processes with covariance $\mbox{Cov}\left(\eta^{\alpha,j}(t),\eta^{\alpha,j}(s) \right)=\frac{\alpha}{2}  \exp(-\alpha|t-s|)$.
In what follows, the parameter $\alpha$ will be taken large.
Roughly speaking, it corresponds to the inverse of the typical turnover period of the fluid, as can be seen from the expression of the covariance between $\eta^{\alpha,j}(t)$ and $\eta^{\alpha,j}(s)$.
In particular, the larger the $\alpha$, the more rapid and intense the fluctuations of the fluid are, and the better the mixing is (cfr. the statement of \autoref{thm:main} below). 
Also, the time covariance $\frac{\alpha}{2}  \exp(-\alpha|t-s|)$ converges as distributions to the Dirac delta as $\alpha \to \infty$, and indeed we shall recover the same result of \cite{FlGaLu21} in the limit. The key point here is that iterated integral of the Ornstein-Uhlenbeck process, when tested against smooth functions, behaves as the iterated stochastic integral (in the sense of Stratonovich) of a white noise, plus an error of order $\alpha^{-1}$.
Compared to \cite{FlGaLu21}, where the operator $\mathcal{L}$ is interpreted as a Stratonovich-to-It\=o corrector and thus comes out naturally from computations, one of the main technical difficulties of this paper is to understand \emph{quantitatively} this asymptotic behaviour.

The choice of our model, admittedly phenomenological, stems in the literature concerning stochastic model reduction of geophysical systems with two different time scales (see \cite{MaTiVa01} and references therein), and has proved itself to be very convenient because of the full computability of the Ornstein-Uhlenbeck process.
Ideally, one would take $\uu$ as the solution of the Navier-Stokes equations, or a slight modification thereof (see for instance \cite{FlPa21+,DePa22+} and the series of papers \cite{BeBlPu18+,BeBlPu19+,BeBlPu20,BeBlPu20+}), but this seems currently out of the scope of our technique.  

In the following, we denote $H^s(\T^d)$ the homogeneous Sobolev space on $\T^d$, with smoothness parameter $s\in \R$, referring to \autoref{ssec:anal_fun} for details.
In order to study the mixing properties of the velocity field $\uu^\alpha$, we introduce the following parameters related to the spatial structure of the velocity field. 
Let
\begin{align} \label{eq:def_eps}
\epsilon^2
&\coloneqq
\sup \left\{
\sum_{j \in J}\langle \uu_j , \vv \rangle^2 
\, : \,
\vv \in L^2(\T^d,\R^d), \|\vv\|_{L^2(\T^d,\R^d)}=1
\right\},
\end{align}
and, for every $\gamma \in (0,(d-2)/6)$, $d \geq 3$, define
\begin{align} \label{eq:def_mu}
\mu \coloneqq
\max \left\{
|J|
\sum_{j \in J}  \| \uu_j \|_{H^{d/2-\gamma}(\T^d)} \, ,\,
|J|
\sum_{j \in J}  \| \uu_j \|_{L^\infty(\T^d)}
\, ,\,
\| \mathcal{L} \|^{1/2}_{H^2(\T^d),L^2(\T^d)}
\right\}.
\end{align}
where $\mathcal{L}$ is defined as the operator on the space of zero-mean distributions $\mathcal{S}'(\T^d)$ acting on smooth functions $f \in C^\infty(\T^d)$ via the formula
\begin{align} \label{eq:def_L}
(\mathcal{L} f)(x)
&\coloneqq
\frac12 \sum_{j \in J}
\uu_j(x) \cdot \nabla (\uu_j \cdot \nabla f)(x).
\end{align}
Notice that both $\epsilon$ and $\mu$ only depend on the space structure of the noise. 
We will assume without loss of generality $\mu \in (1,\infty)$.

Our main result is the following, and allows to control the distance between $T$ and $\bar{T}$ in a space of distribution-valued $\theta$-H\"older functions: 
\begin{theorem}[Quantitative mixing] \label{thm:main}
Let $T_0 \in L^2(\T^d)$ with zero mean, $d\geq 3$, and denote $T$ the solution of \eqref{eq:temp} with $\uu=\uu^\alpha$. Then, for every $\gamma \in (0,(d-2)/6)$ and $s>0$ there exist coefficients $\theta, \varkappa$, $\varsigma > 0$ such that for every $\alpha$ sufficiently large
\begin{align*}
\E{
\left\| T - \bar{T} \right\|_{C^\theta([0,1],H^{-s}(\T^d))}
}
\leq
C
\|T_0\|_{L^2(\T^d)}
\left(
\epsilon
+
\frac{\mu^{2+\gamma}}{\alpha^\varkappa}
\right)^\varsigma,
\end{align*}
where $\bar{T}$ is the solution of \eqref{eq:temp_bar}, $\epsilon$, $\mu$ are defined by \eqref{eq:def_eps} and \eqref{eq:def_mu} and $C \in (0,\infty)$ is an unimportant constant, not depending on the parameters $T_0,\kappa,s,\theta,\varkappa,\varsigma,\alpha,\epsilon,\mu$.
\end{theorem}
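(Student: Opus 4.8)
The plan is to compare $T$ and $\bar T$ through their mild (Duhamel) formulations against the heat semigroup $S(t) := e^{\kappa t \Delta}$, exploiting the algebraic fact that the time-integral of a stationary Ornstein--Uhlenbeck process is a Brownian motion up to a boundary term of size $O(\alpha^{-1/2})$. Concretely, writing $A_j := \uu_j \cdot \nabla$ and $\mathcal{L} = \tfrac12 \sum_{j} A_j^2$, I would iterate the representation
\[
T(t) = S(t) T_0 - \sum_{j \in J} \int_0^t S(t-r)\, A_j\, T(r)\, \eta^{\alpha,j}(r)\, dr
\]
to second order, obtaining $T = S(t)T_0 + T^{(1)} + T^{(2)} + R_\alpha$, where $T^{(1)}$ is linear in the $\eta^{\alpha,j}$, $T^{(2)}$ carries a time-ordered double integral against $\eta^{\alpha,j}(r)\eta^{\alpha,k}(\sigma)$, and $R_\alpha$ is the higher-order remainder; in parallel, $\bar T = S(t)T_0 + \int_0^t S(t-r)\mathcal{L}\, S(r) T_0\, dr + \bar R$. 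The heart of the matter is that $T^{(2)}$ reproduces the eddy-diffusivity term: splitting the product into its covariance $\delta_{jk}\tfrac{\alpha}{2} e^{-\alpha|r-\sigma|}$ and a mean-zero fluctuation, the covariance part concentrates on the diagonal $\sigma = r$ as $\alpha \to \infty$, the semigroup gap $S(r-\sigma) \to I$ collapses $A_j S(r-\sigma) A_k S(\sigma)$ to $A_j^2 S(r)$, and $\int_0^r \tfrac{\alpha}{2} e^{-\alpha(r-\sigma)}\, d\sigma \to \tfrac12$ produces exactly the factor $\tfrac12$ in \eqref{eq:def_L}; signs match so that the diagonal mean of $T^{(2)}$ equals $\int_0^t S(t-r)\mathcal{L}S(r)T_0\, dr$.

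Second, I would isolate the two sources of error. The linear term $T^{(1)}$ has zero mean, and testing $\langle T^{(1)}(t), \phi\rangle$ against $\phi \in H^{s}$ rewrites each coefficient as an $L^2$-pairing $\langle \uu_j, \mathbf{V}_r\rangle$ with $\mathbf{V}_r = S(t-r)\phi\, \nabla(S(r)T_0)$; its second moment is
\[
\sum_{j\in J} \int_0^t\!\!\int_0^t \langle \uu_j,\mathbf{V}_r\rangle \langle \uu_j,\mathbf{V}_{r'}\rangle\, \tfrac{\alpha}{2}\, e^{-\alpha|r-r'|}\, dr\, dr',
\]
which by the very definition \eqref{eq:def_eps} of $\epsilon$ is bounded by $\epsilon^2$ times an integral of $\|\mathbf{V}_r\|_{L^2}^2$, i.e. by $\epsilon^2$ times norms of $T_0$ and $\phi$. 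This is where the factor $\epsilon$ in the statement is born. The $\alpha$-dependent error, on the other hand, comes from the mean-zero fluctuation of $T^{(2)}$, from the diagonal and semigroup defects $(S(r-\sigma)-I)$ and $(1-e^{-\alpha r})$, and from the remainders $R_\alpha,\bar R$; these I would estimate in negative Sobolev norms, using the identity $\int_0^t \eta^{\alpha,j}(r)\,dr = \beta^j(t) - \alpha^{-1}\big(\eta^{\alpha,j}(t) - \eta^{\alpha,j}(0)\big)$, with $\beta^j$ the driving Brownian motion, to convert iterated $\eta$-integrals into iterated stochastic integrals plus explicit $O(\alpha^{-1})$ corrections, and the parameters $\mu$ in \eqref{eq:def_mu} to control the operator norms of the $A_j$ and of $\mathcal{L}$ between the relevant Sobolev scales (this is why the critical-minus regularity $H^{d/2-\gamma}$, the $L^\infty$ bounds, and the $H^2 \to L^2$ norm of $\mathcal{L}$ all appear). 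Summing, the defects contribute the factor $\mu^{2+\gamma}\alpha^{-\varkappa}$.

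Since $\kappa$ may vanish, there is no diffusive smoothing to lean on, so convergence of the Duhamel series and control of $R_\alpha$ must come from the self-organization of the rough transport into the bounded, dissipative operator $\mathcal{L}$: I would organize the expansion by pairing consecutive Ornstein--Uhlenbeck factors, each pair yielding an $\mathcal{L}$-type contribution (bounded $H^2 \to L^2$ by $\mu^2$) plus an $O(\alpha^{-1})$ correction, so that the effective series runs in powers of the \emph{bounded} operator $\mathcal{L}$ over the finite horizon $[0,1]$, with unpaired factors controlled by $\epsilon$. I expect this pairing and diagonal-concentration estimate --- making the replacement of the iterated Ornstein--Uhlenbeck integral by its Stratonovich white-noise analogue quantitative, with a genuine rate $\alpha^{-\varkappa}$ and without losing more regularity than the $\uu_j$ actually possess --- to be the principal obstacle.

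Finally, writing $W := T - \bar T$, I would upgrade the fixed-time bound $\sup_{t}\E{\|W(t)\|_{H^{-s_0}}^2} \leq C\|T_0\|_{L^2}^2\,(\epsilon + \mu^{2+\gamma}\alpha^{-\varkappa})^2$ to the Hölder-in-time statement. I would prove increment estimates $\E{\|W(t)-W(t')\|_{H^{-s_1}}^2} \leq C|t-t'|^{2\theta_0}$, uniform in $\alpha$ (but not small), using the Hölder continuity of $S$ in negative Sobolev norms together with the finite correlation time of $\uu^\alpha$, obtain via a Kolmogorov argument a uniform bound on $\E{\|W\|_{C^{\theta_0}(H^{-s_1})}}$, and then interpolate the small fixed-time estimate against this crude regular estimate,
\[
\|W\|_{C^\theta(H^{-s})} \leq C\, \|W\|_{C^0(H^{-s_0})}^{1-\lambda}\, \|W\|_{C^{\theta_0}(H^{-s_1})}^{\lambda}.
\]
Taking expectations and applying Cauchy--Schwarz, the interpolation exponent fixes admissible $\theta$ and $s$ and transfers the smallness to the final $C^\theta([0,1],H^{-s})$ norm at the cost of a power $\varsigma < 1$, which accounts for the exponent $\varsigma$ in the statement.
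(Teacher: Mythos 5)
Your strategy --- a global Duhamel expansion around the heat semigroup, iterated to second order, with the diagonal concentration of the Ornstein--Uhlenbeck covariance producing $\mathcal{L}$ --- is genuinely different from the paper's route. The paper never expands globally in time: it defines the defect distribution $f_t = T_t - T_0 - \int_0^t A T_s\,ds$, invokes the continuity of the mild-solution map $X \mapsto \mathfrak{S}(X)$ from \cite{GuLeTi06} (\autoref{lem:mild_solution}) to reduce the theorem to an estimate on $\|f\|_{C^\theta([0,1],H^{-\beta}(\T^d))}$, and then proves that estimate by a \emph{local} second-order expansion on windows of length $\delta = c_1\alpha^{-c_2}$ (\autoref{prop:discr_holder}), closing the remainder on each window with the pathwise energy bound $\|T_s\|_{L^2(\T^d)} \le \|T_0\|_{L^2(\T^d)}$ and the increment estimates of \autoref{lem:increments_T} and \autoref{lem:increments_T_bis}, before summing over the $1/\delta$ windows with Doob, Burkholder--Davis--Gundy and Kolmogorov. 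Your final step (interpolating a small fixed-regularity bound against a crude regular bound to get the $C^\theta(H^{-s})$ statement with an exponent $\varsigma<1$) matches the paper's use of \eqref{eq:interp}, and your identification of where $\epsilon$ and $\mu^{2+\gamma}\alpha^{-\varkappa}$ come from is essentially right.

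The genuine gap is the one you yourself flag as ``the principal obstacle'': control of the remainder $R_\alpha$ of the iterated expansion. In your global formulation each additional factor of $\eta^{\alpha,j}$ costs $\sup_{s}|\eta^{\alpha,j}(s)| \sim \alpha^{1/2}\log^{1/2}(1+\alpha)$ (cf.\ \eqref{eq:sup_OU}), so term-by-term bounds on the $n$-th iterate behave like $\alpha^{n/2}/n!$ and the truncation error after two steps is not small in $\alpha$ --- it is large. Rescuing this requires a full Wick-pairing resummation of the series (a diagrammatic argument), which you describe in one sentence but do not carry out, and which is not obviously compatible with the fact that $T$, not $S(\cdot)T_0$, sits inside the remainder. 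The paper sidesteps this entirely: by expanding only over an interval of length $\delta \sim \alpha^{-c_2}$ with $c_2<1$, the double time integral contributes $\delta^2$ against the $\alpha^{1+\gamma/2}$ from two OU factors and one H\"older-in-$H^{-1}$ increment of $T$, and the net power of $\alpha$ is negative (condition \eqref{eq:condition_da2}); the a priori bound $\|T_s\|_{L^2}\le\|T_0\|_{L^2}$, valid uniformly in $\kappa\in[0,1]$, is what closes the estimate in place of any semigroup smoothing. Without either that localization or a completed resummation argument, your proof does not go through as written; the rest of your outline (the covariance computation producing the factor $\tfrac12$, the $\epsilon$-bound on the unpaired stochastic integral, the interpolation) is sound but rests on this unresolved step.
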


In the statement of the theorem, by {\em $\alpha$ sufficiently large} we mean more precisely: there exist  positive numbers $p_1,p_2>0$, depending only on the parameters $\gamma, \theta$ and $\varkappa$, such that the thesis holds for every $\alpha$ satisfying $\mu \alpha^{-p_1} \log^{p_2}(1+\alpha)<1$. The coefficients $p_1,p_2$ can be computed explicitly, from a close inspection of the proof of \autoref{prop:discr_holder} in \autoref{sec:proof}. 

Notice that the condition $d \geq 3$ is only required by technical motivations. The case $d=2$ can be taken into account considering systems in the three dimensional torus with translation invariance along one direction, see also \autoref{rmk:d=2}. 

The result stated in previous \autoref{thm:main} is far from obvious. It is well-known \cite{MaKr99} that dissipation enhancement for advection-diffusion equations occurs for the average field $\E{T}$, when the fluid is sufficiently turbulent, and therefore $\E{T}$ is indeed expected to solve an equation of the form \eqref{eq:temp_bar} -- which can be rigorously proved when $\uu$ is delta-correlated in time, interpreting $\mathcal{L}$ as a Stratonovich-to-It\=o corrector. On the other hand, here not only we deal with a more realistic non-delta-correlated in time velocity field $\uu$, but we also show that the actual field $T$, and not only its average, is close to the solution of \eqref{eq:temp_bar}. So, our result can be seen as an enhancement of both dissipation and mixing. 

In addition, we remark that our estimate holds \emph{uniformly} in the molecular diffusivity $\kappa \in [0,1]$, that is in sharp contrast with the result of \cite{FlGaLu21}. This feature is not secondary, as it is known that diffusion can limit the mixing rate in certain examples \cite{MiDo18}, and we refer to \cite{BeBlPu20} for a thorough discussion on the topic. Notice that our mixing result is similar to \cite[Theorem 1.1]{FlGaLu21+} for two dimensional Euler and Navier-Stokes equations with transport noise.

The mixing property stated in \autoref{thm:main} has many consequences, among which transfer on energy to small scales stands out.
This mechanism is the main responsible for the enhanced dissipation of $L^2(\T^d)$ norm for the solution $T$ of \eqref{eq:temp} -- see \autoref{rmk:transfer} below and \cite{FeIy19,CZDeEl20}. 

In what follows, denote for simplicity $A$ the operator on the space of zero-mean distributions $\mathcal{S}'(\T^d)$ acting on smooth functions $f \in C^\infty(\T^d)$ as $Af = \kappa \Delta f+ \mathcal{L}f$.
Our second result permits to estimate the rate of decay of the $L^2(\T^d)$ norm of the solution of \eqref{eq:temp}, when the molecular diffusivity $\kappa$ is strictly positive.
 
\begin{theorem}[Enhanced dissipation of $L^2(\T^d)$ norm] \label{thm:enhanced}
In the same setting as \autoref{thm:main}, assume in addition $\kappa>0$. Let $c = C^{1/2}\left(\epsilon+\frac{\mu^{2+\gamma}}{\alpha^\varkappa}\right)^{\varsigma/2}>0$ and denote $\lambda>0$ the principal eigenvalue of the operator $-A$.
Then the following inequality holds with probability at least $1-c$ for every $t \in [0,1]$
\begin{align*}
\| T_t \|_{L^2(\T^d)}
\leq 
\frac{ \| T_0 \|_{L^2(\T^d)}}
{
\left( 1+\frac{\kappa}{2 \lambda c^2}\log \left( \frac{c^2 e^{2 \lambda t} + 1}{c^2+1} \right)  \right)^{1/2}
}.
\end{align*}
In particular, for every $t \in [0,1]$ it holds
\begin{align} \label{eq:dissipation}
\E{\| T_t \|_{L^2(\T^d)}}
\leq
c\| T_0 \|_{L^2(\T^d)}
+
\frac{\| T_0 \|_{L^2(\T^d)}}
{
\left( 1+\frac{\kappa}{2 \lambda c^2}\log \left( \frac{c^2 e^{2 \lambda t} + 1}{c^2+1} \right)  \right)^{1/2}
}.
\end{align}
\end{theorem}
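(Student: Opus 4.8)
The plan is to convert the $H^{-1}$-closeness between $T$ and $\bar T$ furnished by \autoref{thm:main} into pointwise-in-time $L^2$ decay, by coupling it with the energy balance of \eqref{eq:temp} through an interpolation inequality. I would first specialize \autoref{thm:main} to $s=1$, fixing the corresponding $\theta,\varkappa,\varsigma$ and hence the constant $c$, so that $\E{\|T-\bar T\|_{C^\theta([0,1],H^{-1}(\T^d))}}\le c^2\|T_0\|_{L^2(\T^d)}$. Markov's inequality at the threshold $c\|T_0\|_{L^2(\T^d)}$ then produces an event $\Omega^\ast$ with $\Prob{\Omega^\ast}\ge 1-c$ on which $\sup_{t\in[0,1]}\|T_t-\bar T_t\|_{H^{-1}(\T^d)}\le c\|T_0\|_{L^2(\T^d)}$. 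The whole decay estimate will be established pathwise on $\Omega^\ast$, and the complementary event will be handled only at the very end for the expectation bound.

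On $\Omega^\ast$ I would assemble three ingredients. First, testing \eqref{eq:temp} against $T$ and using $\dvg \uu^\alpha=0$ gives the energy identity $\tfrac{d}{dt}\|T_t\|_{L^2(\T^d)}^2=-2\kappa\|\nabla T_t\|_{L^2(\T^d)}^2$; in particular $t\mapsto\|T_t\|_{L^2(\T^d)}$ is non-increasing and $T_t$ remains mean-zero. Second, since $-A$ is self-adjoint and nonnegative on mean-zero functions with smallest eigenvalue $\lambda$, the solution of \eqref{eq:temp_bar} obeys $\|\bar T_t\|_{H^{-1}(\T^d)}\le\|\bar T_t\|_{L^2(\T^d)}\le e^{-\lambda t}\|T_0\|_{L^2(\T^d)}$, where the first inequality uses that the lowest nonzero frequency on $\T^d$ has modulus one; by the triangle inequality this yields $\|T_t\|_{H^{-1}(\T^d)}\le(c+e^{-\lambda t})\|T_0\|_{L^2(\T^d)}$ on $\Omega^\ast$. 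Third, the elementary interpolation $\|f\|_{L^2(\T^d)}^2\le\|f\|_{H^{-1}(\T^d)}\|f\|_{H^1(\T^d)}$ (Cauchy–Schwarz on Fourier coefficients for mean-zero $f$) converts the energy dissipation into a closed inequality, namely $\|\nabla T_t\|_{L^2(\T^d)}^2\ge\|T_t\|_{L^2(\T^d)}^4/\|T_t\|_{H^{-1}(\T^d)}^2$. Writing $Y(t)=\|T_t\|_{L^2(\T^d)}^2$, inserting the mixing bound and using $(c+e^{-\lambda t})^2\le 2(c^2+e^{-2\lambda t})$, I obtain the Riccati-type inequality $Y'(t)\le -\frac{\kappa}{(c^2+e^{-2\lambda t})\|T_0\|_{L^2(\T^d)}^2}\,Y(t)^2$.

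Finally I would integrate in the standard way: dividing by $Y^2$ gives $(1/Y)'\ge \kappa\big((c^2+e^{-2\lambda t})\|T_0\|_{L^2(\T^d)}^2\big)^{-1}$, and the explicit primitive $\int_0^t\frac{\kappa\,d\tau}{c^2+e^{-2\lambda\tau}}=\frac{\kappa}{2\lambda c^2}\log\frac{c^2e^{2\lambda t}+1}{c^2+1}$ reproduces exactly the denominator in the statement, proving the claimed bound on $\Omega^\ast$. For the expectation estimate \eqref{eq:dissipation} I would split $\E{\|T_t\|_{L^2(\T^d)}}$ over $\Omega^\ast$ and its complement, using the pathwise bound on $\Omega^\ast$, the crude bound $\|T_t\|_{L^2(\T^d)}\le\|T_0\|_{L^2(\T^d)}$ (monotonicity of the energy) on $(\Omega^\ast)^c$, and $\Prob{(\Omega^\ast)^c}\le c$. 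The decisive point is conceptual rather than computational: the choice $s=1$ is what makes the interpolation produce a genuine $Y^2$ nonlinearity, and hence the exact Riccati structure matching the stated decay profile; one must additionally check that $\uu^\alpha$ is regular enough for the energy identity to hold pathwise, which is immediate here since it is a finite sum of smooth-in-space vector fields modulated by continuous Ornstein–Uhlenbeck processes.
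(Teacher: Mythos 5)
Your proposal is correct and follows essentially the same route as the paper: Markov's inequality applied to the $s=1$ case of \autoref{thm:main} to get the good event $\Omega^\ast$, the triangle/Young bound $\|T_t\|_{H^{-1}(\T^d)}^2\lesssim(c^2+e^{-2\lambda t})\|T_0\|_{L^2(\T^d)}^2$, the interpolation $\|T_t\|_{L^2(\T^d)}^2\le\|T_t\|_{H^{-1}(\T^d)}\|T_t\|_{H^1(\T^d)}$ fed into the energy identity, and explicit integration of the resulting Riccati inequality, with the expectation bound obtained by splitting over $\Omega^\ast$ and its complement. The only differences are cosmetic (you pass through the triangle inequality and then square, whereas the paper squares directly), so nothing further is needed.
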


A few remarks are in order.
It is well-known that the only estimate available a priori for the $L^2(\T^d)$ norm of $T$ is given by (see \autoref{ssec:solution} below):
\begin{align*}
\|T_t\|_{L^2(\T^d)} \leq e^{-\kappa t} \|T_0\|_{L^2(\T^d)},
\end{align*}
and the previous inequality is in fact an equality in the inviscid case $\kappa=0$, when dissipation does not occur.
The content of our previous theorem can thus be read as follows: for every fixed $t>0$, if $\kappa>0$ and $\uu^\alpha$ is such that $\lambda\gg1$ and $c \ll 1$ then $\E{\|T_t\|_{L^2(\T^d)}}\ll e^{-\kappa t} \|T_0\|_{L^2(\T^d)}$, namely dissipation of the $L^2(\T^d)$ norm is enhanced.
Also, taking formally $c \to 0$ in \eqref{eq:dissipation} we obtain an augmented decay rate for the $L^2(\T^d)$ norm, that is
\begin{align} \label{eq:enhanced_remark}
\E{\| T_t \|_{L^2(\T^d)}} \leq
\frac{\lambda^{1/2}}{\kappa^{1/2}} e^{- \lambda t}\| T_0 \|_{L^2(\T^d)},
\qquad
\forall t \in [0,1].
\end{align}

In order to make $\lambda \gg 1$ and $c \ll 1$ \emph{simultaneously}, one can choose first the family $\{ \uu_j \}_{j \in J}$ so that $\lambda \gg 1$ and $\epsilon \ll 1$ at the same time, and then take $\alpha$ sufficiently large so that $c \ll 1$. Indeed, the parameters $\lambda$, $\epsilon$ and $\mu$ depend only on spatial properties of the family $\{ \uu_j \}_{j \in J}$, and they are essentially decoupled from $\alpha$, which models instead the temporal structure of the velocity field.
The problem of finding a family $\{ \uu_j \}_{j \in J}$ that renders simultaneously $\lambda$ large and $\epsilon$ small has been previously treated (see \cite{Ga20} and subsequent works), and in the present work we will not discuss this point in details.
Let us just mention that in \cite{FlGaLu21} the authors use a suitable structure of the noise inspired by the Kraichnan model of turbulence, cfr. also \cite[Remark 1.8]{FlGaLu21+} and the recent work \cite{GeYa22+} for a similar construction.
Also worth of mention is that, more generally, \cite{FlGaLu21+} allows for a space structure of the noise as in the series of papers starting with \cite{Ga20} on the convergence of (stochastic) transport equations to (deterministic) parabolic ones.

Our main results are only significant over a finite time horizon, that we have taken of unitary length for simplicity: indeed, in \autoref{thm:main} the relative error between $T_t$ and $\bar{T}_t$ becomes more and more important as time increases, since $\bar{T}_t \to 0$ in $L^2(\T^d)$ exponentially fast; similarly, the estimate of \autoref{thm:enhanced} is extremely good for small times, but as it is stated becomes weaker than the simpler $\|T_t\|_{L^2(\T^d)} \leq e^{-\kappa t} \|T_0\|_{L^2(\T^d)}$ for long times. 

The differences between our result and the existing literature are several.
Some results on dissipation enhancement properties of random velocity fields have already been obtained in the framework of homogenization theory. 
In \cite{CaFo95,Fa04}, it is proved that, for a suitable sequence of random velocity fields $\uu$, the solution $T$ of \eqref{eq:temp} converges in law towards a new advection-diffusion equation, with augmented diffusivity but non-zero advection. An equation of the form \eqref{eq:temp_bar} is then recovered when considering the average $\E{T}$ or an appropriate diffusive scaling of the solution.
However, these works do not show any mixing property. 

Also interesting is the outstanding  \cite{CoKiRyZl06}, where a deterministic time-independent velocity field of the form $a \uu$, $a\gg 1$ is considered. Roughly speaking, in that work the authors show that the system is well-mixed for $a$ sufficiently large if and only if the operator $\uu \cdot \nabla$ has no eigenfunctions in $H^1(\T^d)$ other than the constant functions, but a quantitative estimate of the mixing property, as that given in \autoref{thm:main} or \cite[Theorem 1.1]{FlGaLu21}, is not provided. In \cite{BeHaNa05}, a similar result is obtained (for bounded domains with Dirichlet boundary conditions) studying the principal eigenvalue $\lambda_a$, $a \gg 1$, of the operator $-\kappa \Delta + a\uu \cdot \nabla$.

Another example of results similar to ours in the  existing literature is given by \cite{BeBlPu20}, where the authors consider advection-diffusion equation on $\T^d$, $d=2,3$, subject to a velocity field $\uu$ solving 2D stochastic Navier-Stokes or 3D stochastic hyperviscous Navier-Stokes.
Roughly speaking, the main result of \cite{BeBlPu20} states that for every $\kappa \in (0,1]$ and $s>0$ there exists $\gamma>0$ and a random variable $C$ with values in $(0,\infty)$ such that, for every zero mean initial datum $T_0 \in H^s(\T^d)$ the solution $T$ of \eqref{eq:temp} satisfies \emph{almost surely} for every $t>0$
\begin{align}
\|T_t\|_{H^{-s}(\T^d)} \label{eq:BBP1}
&\leq
C e^{-\gamma t} 
\|T_0\|_{H^{s}(\T^d)},
\\
\|T_t\|_{L^2(\T^d)}  \label{eq:BBP2}
&\leq
C \kappa^{-1}
e^{-\gamma t} 
\|T_0\|_{L^2(\T^d)},
\end{align}
with some bound on the moments of $C$ independent of $\kappa$; moreover, \eqref{eq:BBP1} holds even in the inviscid case $\kappa=0$. Comparing \eqref{eq:BBP1} with \autoref{thm:main}, notice that \eqref{eq:BBP1} implies only the exponential decay of $H^{-s}(\T^d)$ norm of $T$, while \autoref{thm:main} gives the additional information that $T$ decays staying close to $\bar{T}$ (decaying of $\bar{T}$ is quantified in \autoref{ssec:solution}).
Moreover, both \eqref{eq:BBP1} and \eqref{eq:BBP2} become effective for large times; on the contrary, our \autoref{thm:main} and \autoref{thm:enhanced} permit to augment both dissipation and mixing at \emph{fixed} time $t>0$, provided that the velocity field $\uu^\alpha$ is chosen in a suitable way.

\subsection{Structure of the paper}
The paper is structured as follows.

In \autoref{sec:def} we recall basic definitions about homogeneous Sobolev spaces and some classical  result concerning well-posedness of \eqref{eq:temp} and \eqref{eq:temp_bar}. 
A precise description of our model is carried on in \autoref{ssec:model}. 

In \autoref{sec:est} we collect some auxiliary results concerning the time increments of the process $T$ solution of \eqref{eq:temp}. There we prove that $T$ has a.s. regular trajectories as a process taking value in a space of distributions, and we obtain good bounds for the expectation of its increments – see \autoref{lem:increments_T} and \autoref{lem:increments_T_bis}. 
In addition, we state the key result \autoref{prop:discr_holder}, needed for the proof of our main theorem, that consists in a bound on the quantity
\begin{align*}
\sup_{\substack{n,m=1,\dots,1/\delta-1\\n>m}}
(n\delta-m\delta)^{-\theta}
\left| 
\langle \phi, T_{n\delta} \rangle
-
\langle \phi, T_{m\delta} \rangle
-
\delta \sum_{k=m}^{n-1}
\langle A \phi, T_{k\delta} \rangle 
\right|,
\end{align*}
where $\delta$ is a small parameter, suitably chosen in depending on $\alpha$.

In \autoref{sec:main}, we give the proof of \autoref{thm:main} and \autoref{thm:enhanced}. 
  
Concerning the first result, the key idea consists in introducing the random distribution $f$ defined via the formula
\begin{align*}
\langle \phi, f_t \rangle
=
\langle \phi, T_t \rangle
-
\langle \phi, T_0 \rangle
-
\int_0^t \langle A\phi, T_s \rangle ds,
\quad \forall \phi \in H^2(\T^d),
\quad \forall t \in [0,1]:
\end{align*}
of course if one could prove $f=0$, then one would have $T_t = \bar T_t$ for every $t \in [0,1]$; however, it turns out that the difference $T - \bar{T}$ depends \emph{path-by-path} continuously on $f$,  (\autoref{lem:mild_solution}) and therefore we are able to deduce an estimate on $T - \bar{T}$ from an estimate on $f$ --  cfr. \autoref{prop:holder_f}.

As for \autoref{thm:enhanced}, its proof relies on the following energy inequality
\begin{align*}
\frac{d}{dt} \|T_t\|_{L^2(\T^d)}^2
\leq
-2\kappa \frac{\|T_t\|_{L^2(\T^d)}^4}{\|T_t\|^2_{H^{-1}(\T^d)}}
\end{align*}
and a bound on $\|T_t\|^2_{H^{-1}(\T^d)}$ obtained applying \autoref{thm:main} with $s=1$. Since our result on mixing is truly quantitative, we are able to say that $\|T_t\|^2_{H^{-1}(\T^d)}$ is smaller than a certain threshold (depending on $t$) with high probability, and using this information in the previous inequality we deduce an explicit rate of decay for the $L^2(\T^d)$ norm of $T$.

Finally, in \autoref{sec:proof} we prove  \autoref{prop:discr_holder}. Its proof is based on a discretization procedure very common in the literature about averaging and Wong-Zakai approximations theorems for stochastic differential equations, and are inspired by our previous works \cite{AsFlPa20+,FlPa21}.

All in all, in this work there coexist both analytic and probabilistic techniques: the advection-diffusion equation \eqref{eq:temp} is interpreted in a pathwise sense, although the velocity field $\uu$ is random -- it is usually referred to as random PDE rather than SPDE; the key \autoref{lem:mild_solution} is analytic as well; the results contained in \autoref{sec:proof} are more probabilistic in the spirit, and rely on explicit computability of the Ornstein-Uhlenbeck process, Doob maximal inequality, Burkholder-Davis-Gundy inequality and Kolmogorov continuity criterion.

\section{Notation and preliminaries} \label{sec:def} 

Hereafter, we use the symbol $\lesssim$ to indicate inequality up to an unimportant constant $C \in (0,\infty)$.
If the constant $C$ depends on parameters $p_1,\dots,p_n$ we use the symbol $\lesssim_{p_1,\dots,p_n}$ instead. 

\subsection{Functional analytic preliminaries} \label{ssec:anal_fun}

In this paragraph we give basic definitions and first results concerning Sobolev spaces on the torus. 
Here we only consider Sobolev spaces of scalar functions and distributions, the vector-valued case being easily handled componentwise.    

First we introduce some notations.
Let $e_k(x) = (2\pi)^{-d/2} e^{i k \cdot x}$, $k \in \Z^d$. The set $\{e_k\}_{k \in \Z^d}$ is a complete orthonormal system of $L^2(\T^d,\C)$ made of eigenfunctions of the Laplace operator: $\Delta e_k = -|k|^2 e_k$ for every $k \in \Z^d$.

A generic function $f \in L^2(\T^d,\C)$ can be then represented as a Fourier series:
\begin{align*}
f(x) = \sum_{k \in \Z^d} \hat{f}_k e_k(x),
\quad
x \in \T^d,
\end{align*}
for a (unique) square summable sequence $\{\hat{f}_k \}_{k \in \Z^d}$ of Fourier coefficients. The map $\mathfrak{F}:L^2(\T^d,\C) \to \ell^2(\Z^d,\C)$, that associates to every $f$ the sequence of its Fourier coefficients, is an isomorphism of Hilbert spaces; it can be extended to an isomorphism between the space of (complex-valued) tempered distributions $\mathcal{S}'(\T^d,\C)$ and space of sequences of Fourier coefficients $\{ \hat{f}_k \}_{k \in \Z^d}$ such that $\sum_{k \in \Z^d} |k|^{2s} |\hat{f}_k|^2 < \infty$ for some $s \in \R$. 
Without explicit mention, in the following we will identify a tempered distribution $f \in \mathcal{S}'(\T^d,\C)$ with the sequence of its Fourier coefficients $\{ \hat{f}_k \}_{k \in \Z^d}$. 

Unless otherwise stated, we will only work in the following with the space $\mathcal{S}'(\T^d) \subset\mathcal{S}'(\T^d,\C)$ of real-valued distributions with zero mean, that corresponds to sequences $\{ \hat{f}_k \}_{k \in \Z^d}$ such that $\hat{f}_0=0$ and $\hat{f}_k = \overline{\hat{f}_{-k}}$ for every $k \in \Z^d$. 
We denote $\Z^d_0 = \Z^d \setminus \{ \mathbf{0}\}$.

Smoothness of a distribution $f \in \mathcal{S}'(\T^d)$ can be read in terms of summability of the sequence of its Fourier coefficients $\{ \hat{f}_k \}_{k \in \Z^d_0}$: for $s \in \R$, define the Sobolev space
\begin{align*}
H^s(\T^d) = 
\left\{ 
f \in \mathcal{S}'(\T^d) :
\| f \|_{H^s} \coloneqq
\sum_{k \in \Z^d_0} |k|^{2s} |\hat{f}_k|^2 < \infty
\right\},
\end{align*}
which is a Hilbert space when equipped with the scalar product 
\begin{align*}
\langle f, g \rangle_{H^s(\T^d)}
=
\sum_{k \in \Z^d_0} |k|^{2s} \hat{f}_k \hat{g}_{-k},
\quad
f,g \in H^s(\T^d)
.
\end{align*}
In the special case $s=0$, the Sobolev space corresponds to the space $L^2(\T^d)$ of real-valued, square integrable functions on the torus with zero mean. The scalar product $\langle \cdot, \cdot \rangle \coloneqq \langle \cdot, \cdot \rangle_{H^0(\T^d)}$ is also a duality map between $H^s(\T^d)$ and $H^{-s}(\T^d)$ for every $s \in \R$:
\begin{align*}
\langle f, g \rangle
=
\sum_{k \in \Z^d_0} \hat{f}_k \hat{g}_{-k},
\quad
f \in H^s(\T^d), \,
g \in H^{-s}(\T^d)
.
\end{align*}
  
Sobolev spaces form a Hilbert scale in the sense of Krein-Petunin \cite{KrPe66}, with respect to the operator $(-\Delta)^{1/2}$: 
\begin{align*}
\langle f, g\rangle_{H^s} = 
\langle (-\Delta)^{s/2} f,(-\Delta)^{s/2} g \rangle. 
\end{align*}
In particular, the map $(-\Delta)^{s/2}: H^r(\T^d) \to H^{r+s}(\T^d)$ is an isomorphism for every $s,r \in \R$ and the following interpolation inequality holds between $H^{s_1}(\T^d)$ and $H^{s_2}(\T^d)$, for $s_1, s_2 \in \R$, $s_1 < s_2$ and $\theta \in (0,1)$:
\begin{align} \label{eq:interp}
\|f\|_{H^{s_\theta}(\T^d)}
\leq
\|f\|_{H^{s_1}(\T^d)}^\theta
\|f\|_{H^{s_2}(\T^d)}^{1-\theta},
\quad
s_\theta = \theta s_1 + (1-\theta)s_2.
\end{align}

Finally, we have the following lemma, cfr. \cite[Corollary 2.55]{BaChDa11} for a proof in the full space:
\begin{lemma} \label{lem:prod_bess}
Let $s_1,s_2 \in \R$ such that $s_1,s_2 < d/2$ and $s_1+s_2 > 0$. Then for every $f \in H^{s_1}(\T^d)$ and $g \in H^{s_2}(\T^d)$ the product $fg \in H^{s_1+s_2-d/2}(\T^d)$ and
\begin{align*}
\|fg\|_{H^{s_1+s_2-d/2}(\T^d)}
\lesssim_{s_1,s_2}
\|f\|_{H^{s_1}(\T^d)} \|g\|_{H^{s_2}(\T^d)}.
\end{align*}
\end{lemma}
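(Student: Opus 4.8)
The plan is to transcribe the Littlewood--Paley/Bony paraproduct proof of the full-space statement \cite[Corollary 2.55]{BaChDa11} to the periodic setting. First I would record the Fourier-analytic reduction: since $e_l e_m = (2\pi)^{-d/2} e_{l+m}$, the product has Fourier coefficients $\widehat{(fg)}_k = (2\pi)^{-d/2}\sum_{l} \hat f_l \hat g_{k-l}$, so that $\|fg\|_{H^s}^2 = \sum_{k \in \Z^d_0}|k|^{2s}|\widehat{(fg)}_k|^2$ is a weighted discrete convolution. Working with zero-mean distributions is a genuine simplification here: all frequencies satisfy $|k|\geq 1$, so on $\T^d$ the homogeneous norm $\|\cdot\|_{H^s}$ is equivalent to the inhomogeneous one and the low-frequency pathologies of homogeneous spaces on $\R^d$ are absent. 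I would then introduce the periodic dyadic blocks $\Delta_j$ (projection onto $|k|\sim 2^j$) and $S_j = \sum_{j'<j}\Delta_{j'}$, for which $\|f\|_{H^s}^2 \simeq \sum_{j\geq 0} 2^{2js}\|\Delta_j f\|_{L^2}^2$, and split the product by Bony's decomposition $fg = T_f g + T_g f + R(f,g)$, where $T_f g = \sum_j (S_{j-1}f)\,\Delta_j g$ and $R(f,g)=\sum_{|j-j'|\le 1}\Delta_j f\,\Delta_{j'} g$. Throughout set $\sigma = s_1+s_2-d/2$.

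For the two paraproducts, the factor $\Delta_j g$ localises the output of $(S_{j-1}f)\Delta_j g$ to an annulus $|k|\sim 2^j$, so the annulus summation lemma gives $\|T_f g\|_{H^{\sigma}}^2 \lesssim \sum_j 2^{2j\sigma}\|(S_{j-1}f)\Delta_j g\|_{L^2}^2$ for every $\sigma\in\R$. Bounding $\|(S_{j-1}f)\Delta_j g\|_{L^2}\le \|S_{j-1}f\|_{L^\infty}\|\Delta_j g\|_{L^2}$ and using Bernstein's inequality through $\|S_{j-1}f\|_{L^\infty}\lesssim \sum_{j'\le j}2^{j'd/2}\|\Delta_{j'}f\|_{L^2}$, the resulting geometric series converges \emph{precisely because} $d/2 - s_1 > 0$, yielding $\|S_{j-1}f\|_{L^\infty}\lesssim 2^{j(d/2-s_1)}\|f\|_{H^{s_1}}$ up to an $\ell^2(j)$ weight. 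Since $\sigma + d/2 - s_1 = s_2$, summation in $j$ reproduces $\|f\|_{H^{s_1}}\|g\|_{H^{s_2}}$; the term $T_g f$ is symmetric and uses $d/2 - s_2 > 0$. Thus the two sub-criticality hypotheses $s_1,s_2 < d/2$ are exactly what make the paraproducts bounded, and these terms are insensitive to the sign of $\sigma$.

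The remainder $R(f,g)$ is the crux and the only place where the sign of $\sigma$ matters. Here each block $\Delta_j f\,\Delta_{j'}g$ with $|j-j'|\le 1$ is supported in a \emph{ball} $|k|\lesssim 2^j$ rather than an annulus, and for $\sigma<0$ (which does occur, e.g. $s_1=s_2=1$ in large $d$) the naive bound $\|\Delta_q R\|_{L^2}\le \sum_{j\gtrsim q}\|\Delta_j f\,\Delta_j g\|_{L^2}$ is too lossy to close in $\ell^2(q)$. The remedy, and the step I expect to require the most care, is to first estimate the remainder in a Besov space of \emph{positive} smoothness and lower integrability: Cauchy--Schwarz gives $\|\Delta_j f\,\Delta_{j'}g\|_{L^1}\le \|\Delta_j f\|_{L^2}\|\Delta_{j'}g\|_{L^2}$, so that $2^{j(s_1+s_2)}\|\Delta_j f\,\Delta_{j'}g\|_{L^1}\lesssim \alpha_j\beta_j$ with $\alpha_j=2^{js_1}\|\Delta_jf\|_{L^2}$, $\beta_j=2^{js_2}\|\Delta_jg\|_{L^2}$ in $\ell^2(j)$. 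Since $s_1+s_2>0$ the ball-supported pieces may legitimately be summed in $\dot B^{s_1+s_2}_{1,1}$, giving $\|R(f,g)\|_{\dot B^{s_1+s_2}_{1,1}}\lesssim \|f\|_{H^{s_1}}\|g\|_{H^{s_2}}$, where the positivity of the smoothness index $s_1+s_2$ is exactly what legitimises the summation. The Besov embedding $\dot B^{s_1+s_2}_{1,1}\hookrightarrow \dot B^{s_1+s_2-d/2}_{2,1}\hookrightarrow H^{s_1+s_2-d/2}$ then trades the integrability gain for precisely the required $-d/2$ of smoothness, completing the estimate. This two-step device---positive-order Besov bound followed by embedding---is how the hypothesis $s_1+s_2>0$ enters.

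An equivalent and more symmetric route I would keep in reserve is duality: since $\|fg\|_{H^\sigma} = \sup\{\,|\langle fg,h\rangle| : \|h\|_{H^{-\sigma}}\le 1\,\}$ and $-\sigma = d/2-s_1-s_2$, the claim is equivalent to the symmetric trilinear bound $|\langle fg,h\rangle|\lesssim \|f\|_{H^{s_1}}\|g\|_{H^{s_2}}\|h\|_{H^{s_3}}$ with $s_3=d/2-s_1-s_2$, where $s_1+s_2+s_3=d/2$, each $s_i<d/2$, and all three pairwise sums are positive; in this form the frequency bookkeeping is symmetric and reduces to the same dyadic case analysis. In either route nothing in the Littlewood--Paley calculus is sensitive to the torus rather than $\R^d$---the periodic projectors obey the same Bernstein and summation inequalities---so the argument is a faithful transcription of \cite[Corollary 2.55]{BaChDa11}, with the added simplification that the zero-mean constraint removes all low-frequency subtleties.
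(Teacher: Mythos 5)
Your argument is correct, and it is essentially the paper's own: the paper gives no proof of this lemma beyond citing \cite[Corollary 2.55]{BaChDa11}, and your write-up is a faithful (and accurate) transcription of that paraproduct proof to the periodic, zero-mean setting, with the roles of the hypotheses $s_1,s_2<d/2$ (paraproducts) and $s_1+s_2>0$ (remainder, via the positive-order $\dot B^{s_1+s_2}_{1,1}$ bound and Besov embedding) correctly identified.
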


\begin{remark} \label{rmk:d=2}
Condition $d\geq 3$ stated in the introduction is a technical limitation of our method, due to application of \autoref{lem:prod_bess}, but there is no physical reason for this constraint. 
Also, the case $d=2$ in \autoref{thm:main} and \autoref{thm:enhanced} is readily implied by our results in dimension $d=3$ and the following observation: for every $s \in \R$ and $f \in H^s(\T^2)$, the function $g:\T^3 \to \R$ defined by $g(x_1,x_2,x_3) \coloneqq f(x_1,x_2)$ satisfies for every $k=(k_1,k_2,k_3) \in \Z^3_0$
\begin{align*}
\hat{g}_{(k_1,k_2,k_3)}
=
\begin{cases}
\hat{f}_{(k_1,k_2)} &\mbox{ if } k_3 = 0,
\\
0 &\mbox{ if } k_3 \neq 0,
\end{cases}
\end{align*}
and thus $g \in H^s(\T^3)$ with $\|g\|_{H^s(\T^3)}=\|f\|_{H^s(\T^2)}$.
\end{remark}

\subsection{The model} \label{ssec:model}
\subsubsection*{Stationary Ornstein-Uhlenbeck processes}
Let $J$ be a finite index set of cardinality $|J|$, and let $(\Omega^+,\{\mathcal{F}^+_t\}_{t \geq 0},\PP^+)$ 
and $(\Omega^-,\{\mathcal{F}^-_t\}_{t \geq 0},\PP^-)$ be two filtered probability spaces, satisfying the usual conditions, which support two families of i.i.d. Brownian motions $\{W^{+,j}\}_{j \in J}$ and $\{W^{-,j}\}_{j \in J}$.
Set $W^j_t = W^{+,j}_t$, for $t \geq 0$, and $W^j_t = W^{-,j}_{-t}$, for $t<0$. 

For every $\alpha > 1$, the processes
\begin{equation} \label{eq:defin_etaa}
\eta^{\alpha,j}(t) 
\coloneqq 
\int_{-\infty}^t \alpha e^{-\alpha(t-s)} dW^j_s, 
\quad t \in [0,1], \quad j \in J,
\end{equation}
constitute a family of i.i.d. stationary Ornstein-Uhlenbeck processes solutions of
\begin{align*}
d \eta^{\alpha,j} = - \alpha \eta^{\alpha,j} dt + \alpha dW^j_t, 
\quad t \in [0,1], \quad j \in J,
\end{align*}
on the filtered probability space $(\Omega,\{\mathcal{F}_t\}_{t \in [0,1]},\PP)$ defined by $\Omega \coloneqq \Omega^- \times \Omega^+$, $\PP \coloneqq \PP^- \otimes \PP^+$, and where $\{\mathcal{F}_t\}_{t \geq 0}$ is the augmentation of the filtration
$\{\mathcal{F}^-_\infty \otimes \mathcal{F}^+_t\}_{t \in [0,1]}$.
Notice that this filtration satisfies the usual conditions.
Moreover, by \eqref{eq:defin_etaa} it holds for every $t \in [0,1]$,
\begin{align} \label{eq:explicit_OU}
\eta^{\alpha,j}(t) 
&= 
- \int_0^\infty \alpha e^{-\alpha(t+s)} dW^{-,j}_s
+ \int_0^t \alpha e^{-\alpha(t-s)} dW^{+,j}_s
\\
&= \nonumber
e^{-\alpha t} \eta^{\alpha,j}(0)
+
\alpha \int_0^t  e^{-\alpha (t-s)} dW^j_s,
\end{align}
where in the second line we have used $W^{+,j}_s=W^{j}_s$ for every $s \geq 0$ and
\begin{align*}
\eta^{\alpha,j}(0) = -\int_0^\infty \alpha e^{-\alpha s} dW^{-,j}_s.
\end{align*}
In particular, the previous line shows that $\eta^{\alpha,j}(0)$ is a $\mathcal{F}_0$-measurable random variable independent of $\eta^{\alpha,j'}$ for $j' \neq j$, and it is a centered Gaussian with covariance $\alpha/2$ for every $j \in J$.

\subsubsection*{The velocity field $\uu^\alpha$} 
Next, we consider \eqref{eq:temp} with velocity field $\uu=\uu^\alpha$, $\alpha>1$, given by
\begin{align*}
\uu^\alpha(t,x) = \sum_{j \in J} \uu_j(x) \eta^{\alpha,j}(t),
\end{align*}
where $\{\uu_j\}_{j \in J}$ is a family of smooth vector fields $\uu_j: \T^d \to \R^d$ such that $\dvg \uu_j = 0$ for every $j \in J$. 

The eddy diffusivity operator $\mathcal{L}$ is related to the spatial properties of the family $\{\uu_j\}_{j \in J}$, and it is rigorously defined as the operator on $\mathcal{S}'(\T^d)$ acting on a smooth function $f \in C^\infty(\T^d)$ via the formula \eqref{eq:def_L}.
In general, $\mathcal{L}$ is a negative-semidefinite operator; under the additional assumption
\begin{align} \label{eq:ass_a}
a(x) \coloneqq \sum_{j \in J} \uu_j(x) \uu^t_j(x)=2 \bar \kappa I, 
\quad
\bar \kappa\geq 0,
\end{align}
the eddy diffusivity is a multiple of the Laplace operator: $\mathcal{L}= \bar{\kappa} \Delta$. However, we will not make use of \eqref{eq:ass_a} in the following.

\begin{remark}
Compare our definition of $\epsilon$ \eqref{eq:def_eps} with that of $\epsilon_Q$ in \cite{FlGaLu21}, cfr. (1.2) therein: since the covariance between $\uu^\alpha(t,x)$ and $\uu^\alpha(t,y)$ equals $\frac{\alpha}{2} \sum_{j \in J} \uu_j(x)\otimes\uu_j(y)$ for every $t \in [0,1]$, we have for every $\vv \in L^2(\T^d,\R^d)$ with $\|\vv\|_{L^2(\T^d,\R^d)}=1$
\begin{align*}
\sum_{j \in J}\langle \uu_j , \vv \rangle^2
&=
\sum_{j \in J} \int_{\mathbb{T}^d} \uu_j(x) \cdot \vv(x) dx \int_{\mathbb{T}^d} \uu_j(y) \cdot \vv(y) dy
\\
&=
\sum_{j \in J} \int_{\mathbb{T}^d}\int_{\mathbb{T}^d}
\vv(x)^\top (\uu_j(x)\otimes\uu_j(y)) \vv(y) dx dy.
\end{align*}
Hence, denoting $Q(x,y)=\uu_j(x)\otimes\uu_j(y)$ the two definitions coincide (up to the square and a multiplicative factor depending on $\alpha$). 
\end{remark} 

\subsection{Notion of solution to \eqref{eq:temp} and \eqref{eq:temp_bar}} \label{ssec:solution}

We give now the notion of solution for the random PDE \eqref{eq:temp}
\begin{align*}
\partial_{t}T + \uu^\alpha \cdot \nabla T  = \kappa \Delta T
\quad \mbox{ in } [0,1] \times \T^d,
\end{align*}
with (deterministic) initial value $T |_{t=0} = T_0 \in L^2(\T^d)$. We shall use this notion of solution throughout the paper.
In what follows, $\mathcal{D}(\T^d)$ stands for the space of real-valued, zero-mean, smooth test functions.

\begin{definition} \label{def:sol}
Assume $\kappa>0$. A stochastic process $T:\Omega \times [0,1] \to L^2(\T^d)$, adapted to the filtration $\{\mathcal{F}_t\}_{t \in [0,1]}$, is a (analytically weak, probabilistically strong) solution of \eqref{eq:temp} if there exists a full-measure set $\tilde{\Omega} \subset \Omega$ such that for every $\omega \in \tilde{\Omega}$ it holds: $T(\omega, \cdot) \in L^\infty([0,1], L^2(\T^d)) \cap L^2([0,1],H^1(\T^d))$ and
\begin{align*}
\langle \phi , T_t \rangle
&=
\langle \phi , T_s \rangle
+
\int_s^t 
\langle \uu^\alpha(r) \cdot \nabla \phi, T_r \rangle dr
+
\kappa
\int_s^t
\langle \Delta \phi, T_r \rangle dr,
\end{align*} 
for every $s<t$ and $\phi \in \mathcal{D}(\T^d)$. 
\end{definition}

For every initial datum $T_0 \in L^2(\T^d)$ and $\kappa>0$, \eqref{eq:temp} is well-posed in the sense of the previous definition. Indeed, the advection velocity $\uu^\alpha$ is almost surely smooth in space and H\"older continuous in time, thus by \cite[Corollary 2.2]{Pa83} the only solution is given by Duhamel formula
\begin{align*}
T_t = e^{\kappa \Delta t} T_0 + \int_0^t e^{\kappa \Delta (t-s)} \,( \uu^\alpha(s) \cdot \nabla T_s ) \,ds.  
\end{align*} 
Existence of (probabilistically strong) solution can be proved by standard approximation schemes and Yamada-Watanabe theorem.
Also, by linearity of the equation and mollification we can also check that the map
$t \mapsto \|T_t\|^2_{L^2(\T^d)}$ has a.s. absolutely continuous trajectories and the following energy inequality (as variables in $L^1([0,1])$) holds with probability one:
\begin{align} \label{eq:energy_equality}
\frac{d}{dt} \| T_t \|^2_{L^2(\T^d)}
=
-2\kappa  \| T_t \|^2_{H^1(\T^d)},
\end{align}
In particular, from \eqref{eq:energy_equality} we deduce the following almost sure energy estimate, which we will use extensively in the following:
\begin{align} \label{eq:energy}
\sup_{t \in [0,1]} \left(\| T_t \|^2_{L^2(\T^d)}
+
2\kappa \int_0^t \| T_s \|^2_{H^1(\T^d)} ds \right)
\leq
\| T_0 \|^2_{L^2(\T^d)}.
\end{align} 
By \eqref{eq:energy_equality} and using the inequality $\| T_t \|^2_{L^2(\T^d)}\leq\| T_t \|^2_{H^1(\T^d)}$, one can deduce the following almost sure decay of $L^2(\T^d)$ norm for the solution of \eqref{eq:temp}:
\begin{align} \label{eq:energy_decay}
\| T_t \|_{L^2(\T^d)}
\leq 
e^{-\kappa t} \| T_0 \|_{L^2(\T^d)}.
\end{align}

In the inviscid case $\kappa=0$, we must introduce the flow $\mathbf{U}^\alpha$ associated to $\uu^\alpha$ to exhibit the (Lagrangian) solution $T_t=T_0 \circ (\mathbf{U}^\alpha)^{-1}$, which however need not to have trajectories in $L^2([0,T],H^1)$. Energy inequalities \eqref{eq:energy} and \eqref{eq:energy_decay} in this case read as $\| T_t \|_{L^2(\T^d)} \leq \| T_0 \|_{L^2(\T^d)}$.

Concerning equation \eqref{eq:temp_bar}, it is well known \cite[Theorem 5.2]{Pa83} that the operator $A$ generates an analytic semigroup of negative type $e^{A \cdot}$ on $\mathcal{S}'(\T^d)$, and the unique solution of \eqref{eq:temp_bar} is given by the Duhamel formula
\begin{align*}
\bar{T}_t = e^{At} T_0,
\quad
t \in (0,1).
\end{align*}
Decay of $L^2(\T^d)$ norm of $\bar{T}$ can be estimated as follows:
\begin{align*}
\| \bar{T}_t \|_{L^2(\T^d)}
\leq 
e^{-\lambda t} \|\bar{T}_0\|_{L^2(\T^d)},
\end{align*}
where $\lambda$ is the principal eigenvalue of the operator $-A$. Notice that $\lambda \geq \kappa$ since $\mathcal{L}$ is a negative-semidefinite operator.

\begin{remark} \label{rmk:transfer}
The inequality $\| T_t \|^2_{L^2(\T^d)}\leq\| T_t \|^2_{H^1(\T^d)}$, used to deduce \eqref{eq:energy_decay} above, may be very loose if the energy of $T$ is distributed at high wavenumbers, viz. the Fourier coefficients $\{\hat{T}_k\}_{k \in \Z^d_0}$ do not decrease sufficiently fast as $|k| \to \infty$.
This is in fact the case: indeed, by \autoref{thm:main}, for every fixed $t >0 $
\begin{align*}
\E{ \| \pi_N T_t \|_{L^2(\T^d)}}
&\leq
N \E{ \| T_t \|_{H^{-1}(\T^d)} }
\\
&\leq
N \left( \|\bar{T}_t \|_{H^{-1}(\T^d)}
+
\E{ \|T_t - \bar{T}_t \|_{H^{-1}(\T^d)} }
\right)
\\
&\lesssim
N \|T_0\|_{L^2(\T^d)}
\left( e^{-\lambda t} + \left( \epsilon +\frac{\mu^{2+\gamma}}{\alpha^\varkappa} \right)^\varsigma \right),
\end{align*}
where $\pi_N : \mathcal{S}'(\T^d) \to C^\infty(\T^d)$ denotes the projector onto Fourier modes $|k| \leq N$, $N \in \N$.
The previous inequality suggests that energy is actually transferred to high wavenumbers if $t>0$, $\epsilon \ll 1$ and $\lambda, \alpha \gg 1$, and hence we do not expect inequality \eqref{eq:energy_decay} to be sharp -- that is indeed the content of \autoref{thm:enhanced}. 
\end{remark}

\section{Useful estimates} \label{sec:est}
In this section, we prove some auxiliary results concerning the time increments of the process $T$ solution of \eqref{eq:temp}. Proofs are mostly inspired by our previous works \cite{AsFlPa20+,FlPa21}. 

Recall that $T$ has trajectories taking values in $L^\infty([0,1], L^2(\T^d))$ almost surely.
Forthcoming \autoref{lem:increments_T} and \eqref{eq:interp} allow to estimate the increments $T_{t+\delta} - T_t$ with respect to Sobolev norms $H^s(\T^d)$, $s \in [-1,0]$. 
It turns out that $T$ is actually a.s. H\"older continuous as a variable taking values in $H^s(\T^d)$ for $s \in (-1,0]$, and it is a.s. Lipschitz continuous as variable taking values in $H^{-1}(\T^d)$; however, its H\"older and Lipschitz constants diverge to infinity for $\alpha \to \infty$, and therefore we need \autoref{lem:increments_T_bis} to better control them in expected value.  

The subsequent \autoref{prop:discr_holder} aims to control the error between the actual solution $T$ of \eqref{eq:temp} and a discretized version of \eqref{eq:temp_bar}. 
We will make an essential use of the latter in the proof of \autoref{prop:holder_f} in \autoref{sec:main}.
Its proof, however, is quite long: for the sake of a clear and effective presentation we postpone it to \autoref{sec:proof}.

To start with, we recall the following useful lemma from \cite{JiZh20}. Let $p \geq 1$ and $j \in J$. Then the supremum of the Ornstein-Uhlenbeck process $\eta^{\alpha,j}$ can be estimated in expected value with
\begin{align} \label{eq:sup_OU}
\mathbb{E} \left[
\sup_{s \in [0,1]}
|\eta^{\alpha,j}(s)|^p
\right]
\lesssim_p \alpha^{p/2} \log(1+\alpha)^{p/2}.
\end{align}
To be precise, in \cite{JiZh20} the inequality is proved for a Ornstein-Uhlenbeck process with zero initial condition $\eta^{\alpha,j}(0)=0$. The general case is recovered using the decomposition
$\eta^{\alpha,j}(s) = \eta^{\alpha,j}(0) + (\eta^{\alpha,j}(s)-\eta^{\alpha,j}(0))$ and Gaussian estimates on $\eta^{\alpha,j}(0)$.
From \eqref{eq:sup_OU} and the definition of $\mu$,  one can deduce the following inequalities:
\begin{align}  \label{eq:sup_u1}
\E{
\sup_{s \in [0,1]}
\| \uu^\alpha (s) \|^p_{H^{d/2-\gamma}(\T^d)} 
}
\lesssim_p \mu^p \alpha^{p/2} \log^{p/2}(1+\alpha); 
\end{align}
\begin{align}  \label{eq:sup_u2}
\E{
\sup_{s \in [0,1]}
\| \uu^\alpha (s) \|^p_{L^\infty(\T^d)} 
}
\lesssim_p \mu^p \alpha^{p/2} \log^{p/2}(1+\alpha). 
\end{align}

We prove now the following result, which allows to control the time increments of the process $T$ in a Sobolev space of negative order. 
\begin{lemma} \label{lem:increments_T}
Let $\delta_{\star}>0$ such that
$\delta_{\star} \alpha \log(1+\alpha)> 1$.
Then for every $p\geq 1$ the following inequality holds:
\begin{align*}
\E{
\sup_{\substack{t+\delta \leq 1 \\ \delta \leq \delta_{\star}}}
\| T_{t+\delta} - T_t \|_{H^{-1}(\T^d)}^p
}
&\lesssim_p \| T_0 \|_{L^2(\T^d)}^p 
\mu ^p
\delta_{\star}^p
\alpha^{p/2} \log^{p/2}(1+\alpha).
\end{align*}
\end{lemma}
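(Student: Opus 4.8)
The plan is to start from the weak formulation in \autoref{def:sol}, which expresses the tested increment as
\[
\langle \phi, T_{t+\delta} - T_t \rangle = \int_t^{t+\delta} \langle \uu^\alpha(r) \cdot \nabla \phi, T_r \rangle\, dr + \kappa \int_t^{t+\delta} \langle \Delta \phi, T_r \rangle\, dr,
\]
and then to exploit the duality characterization $\|T_{t+\delta} - T_t\|_{H^{-1}(\T^d)} = \sup\{ |\langle \phi, T_{t+\delta} - T_t \rangle| : \phi \in \mathcal{D}(\T^d),\, \|\phi\|_{H^1(\T^d)} \leq 1 \}$, which is legitimate since $\mathcal{D}(\T^d)$ is dense in the zero-mean part of $H^1(\T^d)$. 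The entire estimate is carried out pathwise, for fixed $\omega$, and only at the very end do I pass to the $p$-th moment. The crucial structural point is that both integrands can be bounded uniformly over the unit ball of $H^1(\T^d)$ by quantities independent of $\phi$, so that the supremum over $\phi$ can be discarded before integrating in $r$ and taking suprema in $(t,\delta)$.

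For the advection integrand, since $\uu^\alpha(r)$ is almost surely smooth in space, I bound, for every $\phi$ with $\|\phi\|_{H^1(\T^d)} \leq 1$,
\[
|\langle \uu^\alpha(r) \cdot \nabla \phi, T_r \rangle| \leq \|\uu^\alpha(r)\|_{L^\infty(\T^d)} \|\nabla \phi\|_{L^2(\T^d)} \|T_r\|_{L^2(\T^d)} \leq \|\uu^\alpha(r)\|_{L^\infty(\T^d)} \|T_r\|_{L^2(\T^d)},
\]
and then invoke the almost sure decay \eqref{eq:energy_decay} to replace $\|T_r\|_{L^2(\T^d)}$ by $\|T_0\|_{L^2(\T^d)}$. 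For the diffusion integrand I integrate by parts, $\langle \Delta \phi, T_r \rangle = -\langle \nabla \phi, \nabla T_r \rangle$, which is licit because $T_r \in H^1(\T^d)$ for almost every $r$; this gives $|\langle \Delta \phi, T_r \rangle| \leq \|T_r\|_{H^1(\T^d)}$. Applying Cauchy--Schwarz in time together with the energy inequality \eqref{eq:energy} in the form $\kappa \int_0^1 \|T_r\|_{H^1(\T^d)}^2\, dr \leq \tfrac12 \|T_0\|_{L^2(\T^d)}^2$ yields $\kappa \int_t^{t+\delta} \|T_r\|_{H^1(\T^d)}\, dr \leq \tfrac{1}{\sqrt2}\kappa^{1/2}\delta^{1/2}\|T_0\|_{L^2(\T^d)}$. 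Collecting the two bounds, and using $\kappa \leq 1$ and $\delta \leq \delta_\star$, I obtain the pathwise estimate
\[
\sup_{\substack{t+\delta \leq 1 \\ \delta \leq \delta_\star}} \|T_{t+\delta} - T_t\|_{H^{-1}(\T^d)} \lesssim \delta_\star \|T_0\|_{L^2(\T^d)} \sup_{r \in [0,1]} \|\uu^\alpha(r)\|_{L^\infty(\T^d)} + \delta_\star^{1/2} \|T_0\|_{L^2(\T^d)}.
\]

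It then remains to raise to the power $p$, take expectation, and invoke \eqref{eq:sup_u2} to control $\E{\sup_{r \in [0,1]} \|\uu^\alpha(r)\|_{L^\infty(\T^d)}^p} \lesssim_p \mu^p \alpha^{p/2} \log^{p/2}(1+\alpha)$; this handles the advection contribution with exactly the claimed bound. The diffusion contribution is deterministic of size $\delta_\star^{p/2} \|T_0\|_{L^2(\T^d)}^p$, and here the hypothesis $\delta_\star \alpha \log(1+\alpha) > 1$ enters precisely in the form $\delta_\star^{1/2} < \delta_\star \alpha^{1/2} \log^{1/2}(1+\alpha)$, which together with $\mu > 1$ gives $\delta_\star^{p/2} \leq \mu^p \delta_\star^p \alpha^{p/2} \log^{p/2}(1+\alpha)$, so that this term is absorbed into the same right-hand side. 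I do not anticipate a genuine obstacle here: the argument is a duality reduction glued onto the two standard a priori bounds, the velocity $L^\infty$-moment bound and the $L^2$/$H^1$ energy estimate. The only points deserving care are the almost-everywhere-in-$r$ integration by parts used for the diffusion term, and the measurability of the supremum over $(t,\delta)$, which is guaranteed by the almost sure continuity of $r \mapsto T_r$ as a process with values in negative-order Sobolev spaces.
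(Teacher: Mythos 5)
Your proposal is correct and follows essentially the same route as the paper: duality against $H^1$ test functions, the $L^\infty$ bound on $\uu^\alpha$ combined with the energy decay for the advection term, Cauchy--Schwarz in time with the energy inequality for the diffusion term, and absorption of the $\kappa^{1/2}\delta_\star^{1/2}$ contribution via the hypothesis $\delta_\star\alpha\log(1+\alpha)>1$ together with $\mu>1$. No substantive differences to report.
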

\begin{proof}
By the very definition of weak solution of \eqref{eq:temp} and \eqref{eq:energy}, for every test function $\phi \in \mathcal{D}(\T^d)$ one has the following almost sure inequality
\begin{align*}
|\langle \phi, T_{t+\delta} - T_t \rangle|
&\leq
\int_t^{t+\delta}
|\langle \uu^\alpha (s) \cdot \nabla \phi ,T_s \rangle| ds 
+ \kappa
\int_t^{t+\delta} 
|\langle \Delta \phi, T_s \rangle| ds
\\
&\leq 
\|\phi\|_{H^1(\T^d)}
\int_t^{t+\delta}
\| \uu^\alpha(s) \|_{L^\infty(\T^d)} 
\| T_s \|_{L^2(\T^d)} ds
\\
&\quad
+
\kappa
\|\phi\|_{H^1(\T^d)}  
\int_t^{t+\delta} 
\| T_s \|_{H^1(\T^d)} ds
\\
&\leq
\|\phi\|_{H^1(\T^d)}
\| T_0 \|_{L^2(\T^d)}
\delta
\sup_{s \in [0,1]} \left\| \uu^\alpha(s) \right\|_{L^\infty(\T^d)}
\\
&\quad
+
\|\phi\|_{H^1(\T^d)}
\| T_0 \|_{L^2(\T^d)}
\kappa^{1/2} \delta^{1/2}
.
\end{align*}
Since $\phi$ is arbitrary, we deduce
\begin{align*}
\|T_{t+\delta}-T_t\|_{H^{-1}(\T^d)}
&\leq 
\| T_0 \|_{L^2(\T^d)}
\left(
\delta
\sup_{s \in [0,1]} \left\| \uu^\alpha(s) \right\|_{L^\infty(\T^d)}
+
\kappa^{1/2} \delta^{1/2}
\right).
\end{align*}
Taking the supremum (raised to power $p$) over $\delta$, and then expectation, \eqref{eq:sup_u2} yields
\begin{align*}
\E{
\sup_{\substack{t+\delta \leq 1 \\ \delta \leq \delta_{\star}}}
\| T_{t+\delta} - T_t \|_{H^{-1}(\T^d)}^p
}
&\lesssim_p \| T_0 \|_{L^2(\T^d)}^p 
\left(
\delta_{\star}^p
\mu^p
\alpha^{p/2} \log^{p/2}(1+\alpha)
+
\kappa^{p/2} \delta^{p/2}_{\star}
\right).
\end{align*}
The thesis follows recalling that $\kappa^{p/2} \delta^{p/2}_{\star} < \delta_{\star}^p
\mu^p
\alpha^{p/2} \log^{p/2}(1+\alpha)$, due to our choice of parameters.
\end{proof}

The previous lemma can be used to deduce the following result. In view of interpolation inequality \eqref{eq:interp}, the next lemma can be used together with \autoref{lem:increments_T} in order to obtain better estimates -- needed in the following -- on the increments of $T$ in Sobolev spaces $H^s(\T^d)$, with $s \in (-2,-1)$.

\begin{lemma} \label{lem:increments_T_bis}
Let $d\geq 3$, $\delta\in (0,1)$ such that
$\delta\alpha \log(1+\alpha)> 1$ and $\mu \delta^{4/3} \alpha \log^{1/3}(1+\alpha)< 1$. Then for every $\gamma \in (0,(d-2)/2)$ and $p\geq 1$ the following inequality holds for every fixed $t \in [0,1-\delta]$:
\begin{align*}
\E{
\| T_{t+\delta} - T_t \|_{H^{-2-\gamma}(\T^d)}^p
}
&\lesssim_{\gamma,p}
\| T_0 \|^p_{L^2(\T^d)}
\mu^p
\left(
\delta^{p/2}
+ 
\alpha^{-p/2}  \log^{p/2}(1+\alpha)
\right)
.
\end{align*}
\end{lemma}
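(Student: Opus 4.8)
The plan is to work by duality, reducing the bound on $\E{\|T_{t+\delta}-T_t\|_{H^{-2-\gamma}(\T^d)}^p}$ to a bound on $\langle \phi, T_{t+\delta}-T_t\rangle$ for a fixed test function $\phi$, since $\|T_{t+\delta}-T_t\|_{H^{-2-\gamma}(\T^d)}=\sup_{\|\phi\|_{H^{2+\gamma}(\T^d)}\leq 1}\langle \phi, T_{t+\delta}-T_t\rangle$. From the weak formulation of \autoref{def:sol} one has
\begin{align*}
\langle \phi, T_{t+\delta}-T_t\rangle
=
\int_t^{t+\delta}\langle \uu^\alpha(r)\cdot\nabla\phi, T_r\rangle\, dr
+\kappa\int_t^{t+\delta}\langle \Delta\phi, T_r\rangle\, dr.
\end{align*}
The diffusion term is harmless: $|\langle \Delta\phi, T_r\rangle|\leq \|\phi\|_{H^2(\T^d)}\|T_r\|_{L^2(\T^d)}\leq \|\phi\|_{H^{2+\gamma}(\T^d)}\|T_0\|_{L^2(\T^d)}$ by the energy bound \eqref{eq:energy}, so it contributes at most $\kappa\delta\,\|\phi\|_{H^{2+\gamma}(\T^d)}\|T_0\|_{L^2(\T^d)}\leq \delta^{1/2}\|\phi\|_{H^{2+\gamma}(\T^d)}\|T_0\|_{L^2(\T^d)}$, using $\kappa,\delta\leq 1$.

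The advection term is the heart of the matter. Writing $\uu^\alpha=\sum_{j\in J}\uu_j\,\eta^{\alpha,j}$ and freezing the scalar at the left endpoint, I decompose each summand as
\begin{align*}
\int_t^{t+\delta}\eta^{\alpha,j}(r)\langle \uu_j\cdot\nabla\phi, T_r\rangle\, dr
=
\langle \uu_j\cdot\nabla\phi, T_t\rangle\int_t^{t+\delta}\eta^{\alpha,j}(r)\, dr
+\int_t^{t+\delta}\eta^{\alpha,j}(r)\langle \uu_j\cdot\nabla\phi, T_r-T_t\rangle\, dr.
\end{align*}
The factor $\uu_j\cdot\nabla\phi$ is estimated by \autoref{lem:prod_bess} with exponents $d/2-\gamma$ and $1+\gamma$; here the hypothesis $\gamma<(d-2)/2$ is exactly what guarantees $1+\gamma<d/2$, giving $\|\uu_j\cdot\nabla\phi\|_{H^1(\T^d)}\lesssim_\gamma \|\uu_j\|_{H^{d/2-\gamma}(\T^d)}\|\phi\|_{H^{2+\gamma}(\T^d)}$.

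For the frozen term I use $|\langle \uu_j\cdot\nabla\phi, T_t\rangle|\leq \|\uu_j\cdot\nabla\phi\|_{L^2(\T^d)}\|T_0\|_{L^2(\T^d)}$ together with the key quantitative gain: since $\eta^{\alpha,j}$ is stationary with covariance $\tfrac{\alpha}{2}e^{-\alpha|r-s|}$, a direct computation gives $\mathrm{Var}\big(\int_t^{t+\delta}\eta^{\alpha,j}(r)\,dr\big)=\delta-\alpha^{-1}(1-e^{-\alpha\delta})\leq \delta$, so by Gaussianity $\E{|\int_t^{t+\delta}\eta^{\alpha,j}(r)\,dr|^p}\lesssim_p \delta^{p/2}$. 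This is the crucial point: the rapidly oscillating integral of the Ornstein-Uhlenbeck process has size $\delta^{1/2}$ rather than the naive $\delta\sup_r|\eta^{\alpha,j}(r)|\sim \delta\,\alpha^{1/2}\log^{1/2}(1+\alpha)$. Summing over $j$ with $\sum_{j\in J}\|\uu_j\|_{H^{d/2-\gamma}(\T^d)}\leq \mu$ and using Minkowski's inequality, the frozen term contributes $\lesssim_{\gamma,p}\|T_0\|_{L^2(\T^d)}\,\mu\,\delta^{1/2}\|\phi\|_{H^{2+\gamma}(\T^d)}$, which produces the $\delta^{p/2}$ part of the claim.

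The error term is where I expect the only real difficulty, essentially bookkeeping against the standing constraint. I bound $|\langle \uu_j\cdot\nabla\phi, T_r-T_t\rangle|\leq \|\uu_j\cdot\nabla\phi\|_{H^1(\T^d)}\|T_r-T_t\|_{H^{-1}(\T^d)}$ and then $\int_t^{t+\delta}|\eta^{\alpha,j}(r)|\,\|T_r-T_t\|_{H^{-1}(\T^d)}\,dr\leq \delta\sup_r|\eta^{\alpha,j}(r)|\sup_r\|T_r-T_t\|_{H^{-1}(\T^d)}$; taking $p$-th moments, splitting the product by Cauchy-Schwarz, and invoking \eqref{eq:sup_OU} for the supremum of $\eta^{\alpha,j}$ and \autoref{lem:increments_T} for the $H^{-1}$-increment (whose hypothesis $\delta\alpha\log(1+\alpha)>1$ is assumed), one gets a contribution of order $\|T_0\|_{L^2(\T^d)}\,\mu^2\,\delta^2\,\alpha\log(1+\alpha)\,\|\phi\|_{H^{2+\gamma}(\T^d)}$. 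This carries a superfluous power of $\mu$ and a factor $\delta^2\alpha\log$ that looks worse than required; the assumption $\mu\delta^{4/3}\alpha\log^{1/3}(1+\alpha)<1$ is calibrated precisely so that $\mu\delta^2\alpha^{3/2}\log^{1/2}(1+\alpha)<\mu^{-1/2}<1$, whence $\mu^2\delta^2\alpha\log(1+\alpha)\lesssim \mu\,\alpha^{-1/2}\log^{1/2}(1+\alpha)$, yielding the second term in the estimate. Finally, each of the three bounds factorizes as $\|\phi\|_{H^{2+\gamma}(\T^d)}$ times a random variable independent of $\phi$, so I pass to the supremum over $\phi$ to recover the $H^{-2-\gamma}(\T^d)$ norm and take expectations, collecting $\delta^{p/2}+\alpha^{-p/2}\log^{p/2}(1+\alpha)$ and a single factor $\mu^p$ as claimed.
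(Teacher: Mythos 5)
Your proof is correct and follows the paper's decomposition almost exactly: freeze the scalar at the left endpoint, split the advection term into a frozen part and an increment part, estimate the increment part via \autoref{lem:prod_bess} (with the same exponents $d/2-\gamma$ and $1+\gamma$) together with \autoref{lem:increments_T}, and absorb the resulting $\mu^2\delta^2\alpha\log(1+\alpha)$ using the hypothesis $\mu\delta^{4/3}\alpha\log^{1/3}(1+\alpha)<1$ exactly as you do. The one genuine difference is the treatment of the frozen oscillatory integral $\int_t^{t+\delta}\eta^{\alpha,j}(r)\,dr$: the paper substitutes $\eta^{\alpha,j}(s)\,ds = dW^j_s - \alpha^{-1}\,d\eta^{\alpha,j}(s)$ and bounds the Brownian increment (giving $\delta^{1/2}$) and the $\alpha^{-1}$-weighted OU supremum (giving $\alpha^{-1/2}\log^{1/2}(1+\alpha)$) separately, whereas you compute the variance $\delta-\alpha^{-1}(1-e^{-\alpha\delta})\leq\delta$ of the stationary Gaussian integral directly. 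Your route is slightly cleaner and in fact sharper for this term (it yields only $\delta^{p/2}$, with the $\alpha^{-p/2}\log^{p/2}(1+\alpha)$ contribution entering solely through the increment part), while the paper's substitution has the advantage of setting up the same $dW-\alpha^{-1}d\eta$ machinery that is reused later in the proof of \autoref{prop:discr_holder}. Both arguments are valid, and your handling of the sum over $j$ (deterministic bound on $\langle\uu_j\cdot\nabla\phi,T_t\rangle$ via the energy estimate, then Minkowski, so that no independence between $T_t$ and the future of $\eta^{\alpha,j}$ is needed) is sound.
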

\begin{proof}
As in the proof of \autoref{lem:increments_T}, we have the following almost sure inequality for every given test function $\phi \in \mathcal{D}(\T^d)$
\begin{align*}
|\langle \phi, T_{t+\delta} - T_t \rangle|
&\leq
\int_t^{t+\delta}
|\langle \uu^\alpha(s) \cdot \nabla \phi, T_s-T_t \rangle| ds 
\\
&\quad+
\left|
\int_t^{t+\delta} 
\langle \uu^\alpha(s) \cdot \nabla \phi, T_t \rangle ds \right|
\\
&\quad 
+ \kappa
\int_t^{t+\delta} 
|\langle \Delta  \phi, T_s \rangle| ds.
\end{align*}
Let us deal with each term separately.
Using \autoref{lem:prod_bess} with $s_1 = d/2-\gamma$, $s_2 = 1+\gamma$ and \autoref{lem:increments_T} we get 
\begin{align*}
\int_t^{t+\delta}
|\langle \uu^\alpha(s) \cdot \nabla \phi, T_s-T_t \rangle| ds
&\leq
\int_t^{t+\delta}
\left\| \uu^\alpha(s)\cdot \nabla \phi \right\|_{H^{1}(\T^d)}
\left\| T_s-T_t \right\|_{H^{-1}(\T^d)} ds 
\\
&\lesssim_\gamma
\| \phi\|_{H^{2+\gamma}(\T^d)}
\int_t^{t+\delta}
\left\| \uu^\alpha(s)\right\|_{H^{d/2-\gamma}(\T^d)} \left\| T_s-T_t \right\|_{H^{-1}(\T^d)} ds,
\end{align*}
where we use $d/2-\gamma < d/2$, $1+\gamma < d/2$ to apply \autoref{lem:prod_bess}, which correspond to the conditions $d \geq 3$, $\gamma \in (0,(d-2)/2)$.

Moving to the next term, recall that $\uu^\alpha(s) ds = \sum_{j \in J} \uu_j \eta^{\alpha,j}(s) ds = \sum_{j\in J} \uu_j dW^j_s - \alpha^{-1} \sum_{j \in J} \uu_j d\eta^{\alpha,j}(s)$, and thus
\begin{align*}
\left|
\int_t^{t+\delta} 
\langle \uu^\alpha(s) \cdot \nabla \phi, T_t \rangle ds \right|
&=
\left|
\left\langle 
\left( \int_t^{t+\delta} 
\uu^\alpha(s) ds \right)
\cdot \nabla \phi, T_t \right\rangle \right|  
\\
&\leq
\| \phi \|_{H^1(\T^d)}
\| T_0 \|_{L^2(\T^d)} 
 \sum_{j \in J}
\| \uu_j \|_{L^\infty(\T^d)} 
\left| W^j_{t+\delta} - W^j_t \right|
\\
& \quad
+
\| \phi \|_{H^1(\T^d)}
\| T_0 \|_{L^2(\T^d)} 
\alpha^{-1}  
\sup_{s \in [0,1]} 
\| \uu^\alpha(s) \|_{L^\infty(\T^d)} .
\end{align*}
Finally, 
\begin{align*}
\int_t^{t+\delta} 
|\langle \Delta  \phi, T_s \rangle| ds
\leq 
\|\phi\|_{H^2(\T^d)}
\|T_0\|_{L^2(\T^d)} \delta.
\end{align*}
Therefore, since $\phi$ is arbitrary and $\|\phi\|_{H^1(\T^d)}, \|\phi\|_{H^2(\T^d)} \leq \|\phi\|_{H^{2+\gamma}(\T^d)}$ for every $\gamma>0$:
\begin{align*}
\| T_{t+\delta} - T_t \|_{H^{-2-\gamma}(\T^d)}
&\lesssim_\gamma
\sup_{s \in [0,1]} \|\uu^\alpha(s) \|_{H^{d/2-\gamma}(\T^d)}
\int_t^{t+\delta} \| T_s - T_t \|_{H^{-1}(\T^d)} ds
\\
&\quad +
\| T_0 \|_{L^2(\T^d)} 
 \sum_{j \in J}
\| \uu_j \|_{L^\infty(\T^d)} 
\left| W^j_{t+\delta} - W^j_t \right|
\\
&\quad+
\| T_0 \|_{L^2(\T^d)} 
\alpha^{-1}  
\sup_{s \in [0,1]} 
\| \uu^\alpha(s) \|_{L^\infty(\T^d)}
+
\| T_0 \|_{L^2(\T^d)} 
\delta.
\end{align*}
Using \eqref{eq:sup_u1}, \eqref{eq:sup_u2},  \autoref{lem:increments_T}, and the inequality $\E{\left| W^j_{t+\delta} - W^j_t \right|^p} \lesssim_p \delta^{p/2}$ valid for every $p\geq 1$ we get (recall that $\delta<1$ and therefore $\delta < \delta^{1/2}$)
\begin{align*}
\E{
\| T_{t+\delta} - T_t \|_{H^{-2-\gamma}(\T^d)}^p
}
&\lesssim_{\gamma,p}\| T_0 \|^p_{L^2(\T^d)} 
\mu^{2p} 
\delta^{2p}
\alpha^{p} \log^{p}(1+\alpha)
\\
&\quad+
\| T_0 \|^p_{L^2(\T^d)} 
\mu^p
\delta^{p/2}
\\
& \quad
+ \| T_0 \|^p_{L^2(\T^d)}
\mu^p \alpha^{-p/2}  \log^{p/2}(1+\alpha).
\end{align*}
The thesis now follows recalling our choice of parameters. 
\end{proof}

For the next proposition we need some preparation.
Divide the interval $[0,1]$ into subintervals of the form $[n\delta,(n+1)\delta]$, for some $\delta \in (0,1)$. In the following $\delta$ will be taken small, and it may depend on the parameter $\alpha$. The idea behind this subdivision is to control the following quantity:
\begin{align} \label{eq:discr_holder}
\sup_{\substack{n,m=1,\dots,1/\delta-1\\n>m}}
(n\delta-m\delta)^{-\theta}
\left| 
\langle \phi, T_{n\delta} \rangle
-
\langle \phi, T_{m\delta} \rangle
-
\delta \sum_{k=m}^{n-1}
\langle A \phi, T_{k\delta} \rangle 
\right|,
\end{align}
for any given test function $\phi \in \mathcal{D}(\T^d)$ and some $\theta>0$, where we have used the symbol $A$ as an abbreviation for the operator $\kappa\Delta + \mathcal{L}$. To be precise, we will prove the following:
\begin{proposition} \label{prop:discr_holder}
Let $d\geq 3$, $\beta>d/2+1$ and $\gamma \in (0,(d-2)/6)$ be fixed.
Then there exist $\theta>0$, $\varkappa >0$ and $\delta>0$ such that $1/\delta$ is an integer and for every $\alpha$ sufficiently large 
\begin{align*}
&\E{
\sup_{\substack{n,m=1,\dots,1/\delta-1\\n>m}}
(n\delta-m\delta)^{-\theta}
\left| 
\langle \phi, T_{n\delta} \rangle
-
\langle \phi, T_{m\delta} \rangle
-
\delta \sum_{k=m}^{n-1}
\langle A \phi, T_{k\delta} \rangle 
\right|}
\\
&\qquad\qquad\lesssim_\gamma
\| \phi \|_{H^{\beta}(\T^d)}
\| T_0 \|_{L^2(\T^d)}
\left( \epsilon + \frac{\mu^{2+\gamma}}{\alpha^{\varkappa}} 	\right)
.
\end{align*}
\end{proposition}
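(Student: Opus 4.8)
The plan is to turn the defect inside the supremum into a sum of martingale and averaging terms by repeated use of the weak formulation of \eqref{eq:temp}, isolate the Stratonovich-to-It\=o correction that reconstructs $\mathcal{L}$, and bound everything through the increment estimates of \autoref{sec:est} together with martingale inequalities. Write $\psi_j \coloneqq \uu_j \cdot \nabla \phi$, which is a legitimate zero-mean test function since $\dvg \uu_j = 0$. Applying \autoref{def:sol} on $[m\delta, n\delta]$ and using $A = \kappa\Delta + \mathcal{L}$, the quantity to estimate is
\[
D_{m,n} \coloneqq \sum_{j \in J}\int_{m\delta}^{n\delta}\eta^{\alpha,j}(r)\langle \psi_j, T_r\rangle\, dr + \kappa\int_{m\delta}^{n\delta}\langle \Delta\phi, T_r\rangle\, dr - \delta\sum_{k=m}^{n-1}\langle A\phi, T_{k\delta}\rangle.
\]
The diffusive contributions combine into a quadrature error $\kappa\int\langle\Delta\phi,T_r\rangle dr - \kappa\delta\sum_k\langle\Delta\phi,T_{k\delta}\rangle$ that is controlled by \autoref{lem:increments_T}, so the essential task is to show that the advective part $\sum_j\int\eta^{\alpha,j}\langle\psi_j,T_r\rangle\,dr$ reproduces $\delta\sum_k\langle\mathcal{L}\phi,T_{k\delta}\rangle$ up to errors of size $\epsilon$ and $\alpha^{-1}$.

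First I would freeze coefficients subinterval by subinterval: for $r\in[k\delta,(k+1)\delta]$, substitute the weak formulation once more with test function $\psi_j$ to write $\langle\psi_j,T_r\rangle = \langle\psi_j,T_{k\delta}\rangle + \int_{k\delta}^r\langle\uu^\alpha(\sigma)\cdot\nabla\psi_j,T_\sigma\rangle d\sigma + \kappa\int_{k\delta}^r\langle\Delta\psi_j,T_\sigma\rangle d\sigma$. This decomposes each subinterval contribution into a \emph{frozen} term $\langle\psi_j,T_{k\delta}\rangle\int_{k\delta}^{(k+1)\delta}\eta^{\alpha,j}(r)\,dr$, an \emph{iterated-integral} term, and remainders carrying the factor $T_\sigma - T_{k\delta}$, which are absorbed using \autoref{lem:increments_T} and \autoref{lem:increments_T_bis}. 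For the frozen term I would use $\eta^{\alpha,j}(r)\,dr = dW^j_r - \alpha^{-1}d\eta^{\alpha,j}(r)$, so that $\int_{k\delta}^{(k+1)\delta}\eta^{\alpha,j}(r)\,dr = (W^j_{(k+1)\delta}-W^j_{k\delta}) - \alpha^{-1}(\eta^{\alpha,j}((k+1)\delta)-\eta^{\alpha,j}(k\delta))$. Summed over $k$, the Brownian part is a martingale whose bracket is $\sum_j\langle\psi_j,T_{k\delta}\rangle^2 \leq \epsilon^2\|\nabla\phi\|_{L^\infty}^2\|T_{k\delta}\|_{L^2}^2$ by the very definition \eqref{eq:def_eps} of $\epsilon$; this produces the $\epsilon$ contribution after Doob and Burkholder-Davis-Gundy, while the $\alpha^{-1}$ boundary terms are bounded via \eqref{eq:sup_OU}.

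The crux is the iterated-integral term. After replacing $T_\sigma$ by $T_{k\delta}$ (incurring an error handled by the increment lemmas) and expanding $\uu^\alpha(\sigma)=\sum_{j'}\uu_{j'}\eta^{\alpha,j'}(\sigma)$, the key object is the double Ornstein-Uhlenbeck integral $\int_{k\delta}^{(k+1)\delta}\eta^{\alpha,j}(r)\int_{k\delta}^{r}\eta^{\alpha,j'}(\sigma)\,d\sigma\,dr$. A direct computation from $\mathrm{Cov}(\eta^{\alpha,j}(r),\eta^{\alpha,j}(\sigma))=\tfrac{\alpha}{2}e^{-\alpha|r-\sigma|}$ gives expectation $\tfrac{\delta}{2}-\tfrac{1}{2\alpha}(1-e^{-\alpha\delta})$ on the diagonal $j=j'$ and $0$ off-diagonal, i.e. the Stratonovich value $\tfrac{\delta}{2}\mathbf{1}_{\{j=j'\}}$ up to an $O(\alpha^{-1})$ correction. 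The diagonal part reassembles exactly, by \eqref{eq:def_L}, into $\delta\sum_k\langle\mathcal{L}\phi,T_{k\delta}\rangle$; the fluctuation of the iterated integral about its mean must be controlled through its variance and summed over the subintervals. Here the regularity threshold $\beta>d/2+1$ and the range $\gamma\in(0,(d-2)/6)$ are precisely what \autoref{lem:prod_bess} needs to make the pairings $\langle\uu^\alpha(\sigma)\cdot\nabla\psi_j,T_\sigma\rangle$ and the second-order object $\langle\mathcal{L}\phi,T_{k\delta}\rangle$ well defined, and \eqref{eq:sup_u1}--\eqref{eq:sup_u2} furnish the $\mu$- and $\alpha$-dependent bounds on the velocity suprema.

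Finally, to pass from a fixed pair $(n,m)$ to the weighted supremum, I would first prove a bound of the form $\mathbb{E}|D_{m,n}|^p \lesssim (n\delta-m\delta)^{1+\theta p}\big(\|\phi\|_{H^\beta}\|T_0\|_{L^2}(\epsilon+\mu^{2+\gamma}\alpha^{-\varkappa})\big)^p$ for $p$ large, noting that $D_{m,n}=G_n-G_m$ for the discrete process $G_n=\langle\phi,T_{n\delta}\rangle-\delta\sum_{k<n}\langle A\phi,T_{k\delta}\rangle$, and then invoke a Kolmogorov-type continuity criterion on this one-parameter family to bring the supremum, with weight $(n\delta-m\delta)^{-\theta}$, inside the expectation, for any $\theta$ strictly below the H\"older exponent produced by the $p/2$ power coming from Burkholder-Davis-Gundy. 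The parameters $\delta$ and $\varkappa$ are then fixed by balancing the averaging error (positive powers of $\delta$ against the $\alpha^{1/2}\log^{1/2}(1+\alpha)$ factors from the OU suprema) against the $\alpha^{-1}$ correction, taking $\delta$ as a suitable negative power of $\alpha$ compatible with the constraints $\delta\alpha\log(1+\alpha)>1$ and $\mu\delta^{4/3}\alpha\log^{1/3}(1+\alpha)<1$. I expect the main obstacle to be exactly this quantitative, $p$-th moment control, uniform over the $O(\delta^{-2})$ pairs $(n,m)$, of the deviation of the iterated Ornstein-Uhlenbeck integrals from their Stratonovich limit $\tfrac{\delta}{2}$, and the bookkeeping of the interlocking $\delta$- and $\alpha$-powers so that all error terms collapse into the single bound $\epsilon+\mu^{2+\gamma}\alpha^{-\varkappa}$.
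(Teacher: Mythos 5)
Your overall architecture coincides with the paper's: the same freezing of coefficients on each subinterval via a second application of the weak formulation, the same splitting into a frozen first-order term (whose Brownian part yields the $\epsilon$ contribution through \eqref{eq:def_eps} and Burkholder--Davis--Gundy plus Kolmogorov), the same iterated Ornstein--Uhlenbeck integral whose mean $\tfrac{\delta}{2}\delta_{j,j'}$ reconstructs $\mathcal{L}$, and remainder terms absorbed by \autoref{lem:increments_T} and \autoref{lem:increments_T_bis} with $\delta$ a negative power of $\alpha$. However, two steps are under-resolved, and one of them would fail as written.

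First, the $\alpha^{-1}\bigl(\eta^{\alpha,j}((k+1)\delta)-\eta^{\alpha,j}(k\delta)\bigr)$ correction to the frozen term cannot be ``bounded via \eqref{eq:sup_OU}'' termwise: there are $1/\delta$ summands (the sum does not telescope because of the adapted weight $\langle \uu_j\cdot\nabla\phi,T_{k\delta}\rangle$), each of expected size $\alpha^{-1}\sup_s|\eta^{\alpha,j}(s)|\sim\alpha^{-1/2}\log^{1/2}(1+\alpha)$, so the naive bound is $\delta^{-1}\alpha^{-1/2}\log^{1/2}(1+\alpha)$, which diverges for the admissible choice $\delta\sim\alpha^{-c_2}$ with $c_2>4/5>1/2$ forced by your own constraint $\mu\delta^{4/3}\alpha\log^{1/3}(1+\alpha)<1$. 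The paper's term $I_{143}$ is instead handled by an Abel summation that transfers the increment from $\eta^{\alpha,j}$ onto $T$, after which the interpolated increment bounds of \autoref{lem:increments_T} and \autoref{lem:increments_T_bis} produce factors like $(\delta\alpha)^{(\theta-1)/2}$ with $\theta=\frac{1+\gamma}{1+2\gamma}<1$, which do vanish since $\delta\alpha\to\infty$. Second, and this is the crux you yourself flag as the main obstacle: centering the double integral $c_{j,j'}(k)=\int_{k\delta}^{(k+1)\delta}\int_{k\delta}^{s}\eta^{\alpha,j}(s)\eta^{\alpha,j'}(r)\,dr\,ds$ at its \emph{unconditional} mean and ``summing variances over the subintervals'' does not work, because the weights $\langle \uu_{j'}\cdot\nabla(\uu_j\cdot\nabla\phi),T_{k\delta}\rangle$ are $\mathcal{F}_{k\delta}$-measurable and $c_{j,j'}(k)$ is correlated with $\mathcal{F}_{k\delta}$ through $\eta^{\alpha,j}(k\delta)$; the centered summands are neither independent nor orthogonal, so the $O(\delta^{-2})$ cross terms do not cancel. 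The paper resolves this with a Nakao-type decomposition: subtract the \emph{conditional} expectation $\mathbb{E}\left[c_{j,j'}(k)\mid\mathcal{F}_{k\delta}\right]$ to obtain a genuine discrete martingale $M_n$, controlled by Doob's inequality and orthogonality of increments, and treat separately the predictable remainder built from $\mathbb{E}\left[c_{j,j'}(k)\mid\mathcal{F}_{k\delta}\right]-\delta_{j,j'}\tfrac{\delta}{2}$, which carries the extra random term $\eta^{\alpha,j}(k\delta)\eta^{\alpha,j'}(k\delta)\tfrac{\alpha^{-2}}{2}(1-e^{-\alpha\delta})^2$ of expected size $\alpha^{-1}$ and sums to $\delta^{-1}\alpha^{-1}\log(1+\alpha)\to 0$. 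Without this conditional centering your variance computation has no mechanism to exploit cancellation across blocks, so the step as proposed is a genuine gap rather than mere bookkeeping.
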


The proof of the previous result is quite long and technical, and we postpone it to \autoref{sec:proof}.
The particular choice of the parameters $\theta, \varkappa$ and $\delta$ is specified therein.
In the statement of the proposition, $\alpha$ sufficiently large means more precisely: $\mu \alpha^{-p_1} \log^{p_2}(1+\alpha)<1$, for some $p_1,p_2>0$ depending only on the parameters $\gamma, \theta$ and $\varkappa$.
More details are given in the proof of the proposition.
 
Moreover, since we always work at fixed $\gamma$, hereafter we omit the dependence of $\gamma$ in the symbol $\lesssim_\gamma$.

\section{Quantitative mixing and dissipation enhancement} \label{sec:main}

In the first part of this section we prove our main result \autoref{thm:main}. 
The idea is very simple, but effective.

Define the random distribution $f:\Omega \times [0,1] \to H^{-2}(\T^d)$ as follows: for every test function $\phi \in H^{2}(\T^d)$
\begin{align*}
\langle \phi, f_t \rangle
&=
\langle \phi, T_t \rangle
-
\langle \phi, T_0 \rangle
-
\int_0^t \langle A \phi, T_s \rangle ds,
\quad t \in [0,1].
\end{align*}
If one could prove $f=0$, then by \cite{Ba77} we would have $T_t - e^{A t} T_0 = T_t - \bar T_t = 0$ for every $t \in [0,1]$. Of course this is not the case, namely $f \neq 0$; however, owing to \cite[Theorem 1]{GuLeTi06} we can prove that the difference $T-\bar{T}$ is a continuous function of $f$ (with respect to suitable topologies), and therefore $T-\bar{T}$ is small if also $f$ is.

The ``right" topology in which to prove smallness of $f$ turns out to be that of H\"older continuous functions $C^\theta([0,1],H^{-\beta}(\T^d))$, for some small $\theta \in (0,1)$ and $\beta > d/2 + 1$ (cfr. \autoref{lem:mild_solution}). 
We would like to stress that \emph{any} $\theta \in (0,1)$ sufficiently small works: in particualr, it can be taken \emph{arbitrarily} small. This comes with no surprise: the reader familiar with SPDEs would recognize $f$ as a sort of additive noise perturbing the linear equation \eqref{eq:temp_bar}.

In the forthcoming \autoref{prop:holder_f} we provide suitable estimates for the H\"older norm of $f$.
As just discussed, we will use this result in the subsequent \autoref{ssec:proof_main} to prove \autoref{thm:main}.

Finally, we will prove \autoref{thm:enhanced} in \autoref{ssec:proof_enhanced}. We will deduce this theorem from \autoref{thm:main} and energy equality \eqref{eq:energy_equality}, using as an intermediate step Markov inequality to prove that the quantity $\sup_{t \in [0,1]}\| T_t - \bar T_t \|_{H^{-1}(\T^d)} $ is small with high probability.
 
\subsection{Estimate on $f$} \label{ssec:est_f}
 
We begin with the following remark: since $f_0=0$ and the time interval is compact, the H\"older norm of $f$ is equivalent to the H\"older seminorm
\begin{align*}
\| f \|_{C^\theta([0,1],H^{-\beta}(\T^d))}
\sim
\sup_{0<s<t<1}
\frac{\|f_t-f_s\|_{H^{-\beta}(\T^d)}}{|t-s|^{\theta}}.
\end{align*}

\begin{proposition} \label{prop:holder_f}
Let $d \geq 3$, $\beta>d/2+1$ and $\gamma \in (0,(d-2)/6)$.
Then there exists $\theta$ sufficiently small such that, for every $\alpha$ sufficiently large:
\begin{align*}
\E{\|f\|_{C^\theta([0,1],H^{-\beta}(\T^d))}}
\lesssim
\|T_0\|_{L^2(\T^d)}
\left( \epsilon + \frac{\mu^{2+\gamma}}{\alpha^{\varkappa}} 	\right)
\end{align*}
\end{proposition}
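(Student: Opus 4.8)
The plan is to bootstrap the \emph{continuous}-time Hölder bound on $f$ from the \emph{discrete}-time estimate \autoref{prop:discr_holder}, using the increment estimates \autoref{lem:increments_T} and \autoref{lem:increments_T_bis} to control what happens between grid points. By the remark preceding the statement it suffices to bound $\E{\sup_{0<s<t<1}|t-s|^{-\theta}\|f_t-f_s\|_{H^{-\beta}(\T^d)}}$. I would first fix a test function $\phi$ and estimate the scalar quantity $\langle\phi,f_t-f_s\rangle=\langle\phi,T_t-T_s\rangle-\int_s^t\langle A\phi,T_r\rangle\,dr$, passing to the $H^{-\beta}(\T^d)$ operator norm only at the end.

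For fixed $\phi$ and $0<s<t<1$, let $m\delta\le s<(m{+}1)\delta$ and $n\delta\le t<(n{+}1)\delta$ be the bracketing grid points, with $\delta$ the mesh of \autoref{prop:discr_holder}. I would decompose $\langle\phi,f_t-f_s\rangle$ into the grid increment $\langle\phi,f_{n\delta}-f_{m\delta}\rangle$ plus the two boundary pieces $\langle\phi,f_t-f_{n\delta}\rangle$ and $\langle\phi,f_{m\delta}-f_s\rangle$, and then rewrite the grid increment as the object controlled by \autoref{prop:discr_holder}, namely $\langle\phi,T_{n\delta}-T_{m\delta}\rangle-\delta\sum_{k=m}^{n-1}\langle A\phi,T_{k\delta}\rangle$, minus the Riemann-to-integral error $\sum_{k=m}^{n-1}\int_{k\delta}^{(k+1)\delta}\langle A\phi,T_r-T_{k\delta}\rangle\,dr$. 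The grid increment is then precisely what \autoref{prop:discr_holder} bounds, contributing the desired main term $\lesssim|t-s|^{\theta}\|\phi\|_{H^{\beta}(\T^d)}\|T_0\|_{L^2(\T^d)}(\epsilon+\mu^{2+\gamma}/\alpha^{\varkappa})$.

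The remaining pieces are all supported on time-intervals of length $O(\delta)$, and showing them \emph{small} is the crux. The essential point is that, taken separately, $T_t-T_s$ and the drift integral are each only of size $O(\mu)$ or $O(\mu^2)$ — the cancellation that makes $f$ small is supplied solely by \autoref{prop:discr_holder} — so smallness of the leftover pieces must come from the shortness of the intervals. For the boundary pieces and the Riemann error I would use, on one hand, the almost-sure bound $\|T_t-T_s\|_{H^{-\beta}(\T^d)}\lesssim|t-s|(\sup_{r}\|\uu^\alpha(r)\|_{L^\infty(\T^d)}+\kappa)\|T_0\|_{L^2(\T^d)}$ coming from the weak formulation and the energy estimate \eqref{eq:energy} (valid since $\beta\ge 2$), together with \autoref{lem:increments_T} and \autoref{lem:increments_T_bis} for the $T$-increments in negative Sobolev norms; and, on the other hand, the crude bound $|\int_s^t\langle A\phi,T_r\rangle\,dr|\lesssim|t-s|\mu^2\|\phi\|_{H^{\beta}(\T^d)}\|T_0\|_{L^2(\T^d)}$ for the short drift integral. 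Dividing by $|t-s|^{\theta}$ and using $|t-s|\ge\delta$ in the grid regime (respectively $|t-s|<\delta$ in the complementary one), every such term is bounded by $\delta^{1-\theta}$ times a factor polynomial in $\mu$, $\alpha$ and $\log(1+\alpha)$. Since $\delta$ is a negative power of $\alpha$ (as prescribed by \autoref{prop:discr_holder}), these contributions are absorbed into $\mu^{2+\gamma}/\alpha^{\varkappa}$ after, if necessary, shrinking $\varkappa$ and $\theta$; taking expectations and using \eqref{eq:sup_u1} and \eqref{eq:sup_u2} yields a continuous Hölder estimate for fixed $\phi$, at the price of a few extra derivatives on $\phi$ (from the two-derivative loss in $A\phi$ against the $T$-increments).

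Finally, to pass from a fixed test function to the $H^{-\beta}(\T^d)$ operator norm I would test against the Fourier basis, writing $\|f_t-f_s\|_{H^{-\beta}(\T^d)}^2=\sum_{k\in\Z^d_0}|k|^{-2\beta}|\langle e_{-k},f_t-f_s\rangle|^2$ and inserting the per-$\phi$ bound with $\phi=e_k$, for which $\|e_k\|_{H^{s}(\T^d)}=|k|^{s}$. Because testing against $e_k$ costs a factor $|k|^{s}$, the sharp estimate alone is summable only at low frequencies; at high frequencies I would instead use the crude almost-sure Lipschitz-in-$H^{-\beta}$ bound above and optimize the frequency cutoff, interpolating between the small low-frequency contribution and the (merely polynomial in $\alpha$) high-frequency tail. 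I expect this last interplay — reconciling the regularity cost of the operator norm and the two-derivative loss in $A\phi$ with the requirement that the final prefactor be no worse than $\epsilon+\mu^{2+\gamma}/\alpha^{\varkappa}$, and it is here that the conditions $\beta>d/2+1$ and $\gamma\in(0,(d-2)/6)$ are used to make the series converge with the correct power of the small parameter — together with the preservation of smallness through the discrete-to-continuous passage, to be the main technical obstacle; the balancing of $\delta$ against $\alpha$, within the constraints $\delta\alpha\log(1+\alpha)>1$ and $\mu\delta^{4/3}\alpha\log^{1/3}(1+\alpha)<1$, is the delicate bookkeeping that makes everything fit.
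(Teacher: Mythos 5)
Your proposal follows essentially the same route as the paper: equate the Hölder norm with the seminorm, treat $|t-s|\leq\delta$ by the crude almost-sure Lipschitz bound on $T$ and the trivial drift bound (gaining $\delta^{1-\theta}$), and for $|t-s|>\delta$ reduce by the triangle inequality to the grid increments controlled by \autoref{prop:discr_holder}, with $\delta$ a negative power of $\alpha$ absorbing everything into $\mu^{2+\gamma}/\alpha^{\varkappa}$. You are in fact somewhat more explicit than the paper on two points it leaves implicit (the Riemann-sum-versus-integral error in the drift and the passage from a fixed test function to the $H^{-\beta}$ operator norm via the Fourier basis), but the underlying argument is the same.
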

\begin{proof}
Let $s,t \in [0,1]$, $s<t$ and let $\phi \in \mathcal{D}(\T^d)$ be a test function.
Given $\delta$ as in \autoref{prop:discr_holder}, we distinguish two cases:
\begin{itemize}
\item
if $|t-s| \leq \delta$, then arguing as in the proof of \autoref{lem:increments_T} one can prove the following a.s. inequality:
\begin{align*}
\left| 
\langle \phi, (T_t-T_s) \rangle
\right|
&\leq
\|\phi\|_{H^{2}(\T^d)}
\|T_t-T_s\|_{H^{-2}(\T^d)}
\\
&\leq 
\|\phi\|_{H^{2}(\T^d)} 
\| T_0 \|_{L^2(\T^d)}
|t-s| 
\left( \sup_{s \in [0,1]} \| \uu^\alpha \|_{L^\infty(\T^d)} + \kappa \right)
\\
&\leq
\|\phi\|_{H^{2}(\T^d)} 
\| T_0 \|_{L^2(\T^d)}
|t-s|^\theta \delta^{1-\theta} \left( \sup_{s \in [0,1]} \| \uu^\alpha \|_{L^\infty(\T^d)} + \kappa \right).
\end{align*}
The previous inequality, together with
\begin{align*}
\left| 
\int_s^t \langle A \phi, T_r \rangle dr
\right|
&\lesssim
\|\phi\|_{H^{2}(\T^d)} 
\| T_0 \|_{L^2(\T^d)}
\mu^2 |t-s|,
\end{align*}
gives
\begin{align*}
\sup_{\substack{0<s<t<1,\\|t-s|\leq\delta}}
\frac{\|f_t-f_s\|_{H^{-\beta}(\T^d)}}{|t-s|^\theta}
\lesssim
\|T_0\|_{L^2(\T^d)}\delta^{1-\theta} 
\left( \sup_{s \in [0,1]} \| \uu^\alpha \|_{L^\infty(\T^d)} + \mu^2 \right).
\end{align*}
\item
if $|t-s|> \delta$, then there exist $n,m \in \N$, $n\geq m$, such that $t \in (n\delta,(n+1)\delta]$ and $s \in [(m-1)\delta,m\delta)$. Of course with this choice we have $|t-n\delta|,|n\delta-m\delta|,|m\delta-s| \leq |t-s|$, and therefore
\begin{align*}
\frac{\|f_t-f_s\|_{H^{-\beta}(\T^d)}}{|t-s|^\theta}
&\leq
\frac{\|f_t-f_{n\delta}\|_{H^{-\beta}(\T^d)}}{|t-n\delta|^\theta}
+
\frac{\|f_{n\delta}-f_{m\delta}\|_{H^{-\beta}(\T^d)}}{|n\delta-m\delta|^\theta}
\\
&\quad+
\frac{\|f_{m\delta}-f_s\|_{H^{-\beta}(\T^d)}}{|m\delta-s|^\theta}
,
\end{align*}
whenever $n>m$, and
\begin{align*}
\frac{\|f_t-f_s\|_{H^{-\beta}(\T^d)}}{|t-s|^\theta}
&\leq
\frac{\|f_t-f_{n\delta}\|_{H^{-\beta}(\T^d)}}{|t-n\delta|^\theta}
+
\frac{\|f_{n\delta}-f_s\|_{H^{-\beta}(\T^d)}}{|n\delta-s|^\theta}
,
\end{align*}
in the special case $n=m$. Since $|t-n\delta|,|m\delta-s| \leq \delta$, we can estimate
\begin{align*}
\sup_{0<s<t<1}
\frac{\|f_t-f_s\|_{H^{-\beta}(\T^d)}}{|t-s|^\theta}
&\lesssim
\sup_{\substack{0<s<t<1,\\|t-s|\leq\delta}}
\frac{\|f_t-f_s\|_{H^{-\beta}(\T^d))}}{|t-s|^\theta}
\\ 
&\quad+
\sup_{\substack{n,m=1,\dots,1/\delta-1\\n>m}}
\frac{\|f_{n\delta}-f_{m\delta}\|_{H^{-\beta}(\T^d)}}{|n\delta-m\delta|^\theta}.
\end{align*} 
\end{itemize} 
Putting all together, we finally get
\begin{align*}
\|f\|_{C^\theta([0,1],H^{-\beta}(\T^d))}
&\lesssim
\|T_0\|_{L^2(\T^d)} \delta^{1-\theta}\left( \sup_{s \in [0,1]} \| \uu^\alpha \|_{L^\infty(\T^d)} + \mu^2 \right)
\\ 
&\quad+
\sup_{\substack{n,m=1,\dots,1/\delta-1\\n>m}}
\frac{\|f_{n\delta}-f_{m\delta}\|_{H^{-\beta}(\T^d)}}{|n\delta-m\delta|^\theta},
\end{align*}
and therefore the thesis follows by \eqref{eq:sup_u2}, \autoref{prop:discr_holder} and the choice of $\delta$, whenever $\theta$ is sufficiently small.
\end{proof}

\subsection{Proof of \autoref{thm:main}} \label{ssec:proof_main}

We are ready to prove our main result \autoref{thm:main}.
In the first place, we recall \cite[Theorem 1]{GuLeTi06}, that in our setting reads as follows:
\begin{lemma} \label{lem:mild_solution}
Let $\theta \in (0,1)$.
Then for all $\vartheta< \theta$ there exists a linear map
\begin{align*}
\mathfrak{S}: 
C^\theta([0,1],H^{-\beta}(\T^d)) \to
C^\vartheta([0,1],H^{-\beta}(\T^d))
\end{align*}
that associates to every $X \in C^\theta([0,1],H^{-\beta}(\T^d))$ the unique weak solution $u$ of the evolution equation
\begin{align*}
u_t - \int_0^t A u_s ds
=
X_t,
\qquad u_0 = 0.
\end{align*} 

Moreover, 
\begin{align*}
\| \mathfrak{S}(X) \|_{C^\vartheta([0,1],H^{-\beta}(\T^d))}
&\lesssim
\| X \|_{C^\theta([0,1],H^{-\beta}(\T^d))}.
\end{align*}
\end{lemma}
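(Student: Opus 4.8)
The plan is to produce the solution explicitly by the variation-of-constants (Duhamel) formula and to exploit the smoothing of the analytic semigroup $e^{A\cdot}$ — recalled above through \cite[Theorem 5.2]{Pa83} — to tame the diagonal singularity against the H\"older regularity of $X$; this is precisely the mild-solution construction of \cite{GuLeTi06}, which in the present setting reduces to an elementary absolutely convergent Bochner integral. Note first that consistency of the equation at $t=0$ forces $X_0=0$ (as in the application, where $f_0=0$). Throughout I would use the standard bounds for an analytic semigroup of negative type: for $\tau>0$, $\sigma\ge0$ and $\rho\in[0,1]$,
\begin{align*}
\|(-A)^\sigma e^{\tau A}\|_{H^{-\beta}\to H^{-\beta}}\lesssim_\sigma \tau^{-\sigma},
\qquad
\|(e^{\tau A}-I)(-A)^{-\rho}\|_{H^{-\beta}\to H^{-\beta}}\lesssim_\rho \tau^{\rho}.
\end{align*}
The candidate solution is
\begin{align*}
u_t:=e^{tA}X_t+\int_0^t A\,e^{(t-r)A}(X_r-X_t)\,dr .
\end{align*}
The integral converges absolutely in $H^{-\beta}(\T^d)$: indeed $\|A e^{(t-r)A}\|_{H^{-\beta}\to H^{-\beta}}\lesssim (t-r)^{-1}$ while $\|X_r-X_t\|_{H^{-\beta}}\le\|X\|_{C^\theta}|t-r|^{\theta}$, so the integrand is $\lesssim(t-r)^{\theta-1}\|X\|_{C^\theta}$, which is integrable \emph{precisely because} $\theta>0$; moreover $u_0=X_0=0$.

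To check that $u$ solves the weak equation I would argue by approximation, thereby avoiding any Young/Riemann--Stieltjes integral. Mollifying $X$ in time gives smooth paths $X^n\to X$ in $C^{\theta'}([0,1],H^{-\beta}(\T^d))$ for every $\theta'<\theta$; for smooth $X^n$ the classical mild solution $\int_0^t e^{(t-r)A}\dot X^n_r\,dr$ of $\dot u=Au+\dot X^n$, $u_0=0$, solves the equation, and a routine integration by parts (using $\frac{d}{dr}e^{(t-r)A}=-Ae^{(t-r)A}$, $\int_0^t Ae^{(t-r)A}\,dr=e^{tA}-I$ and $X^n_0=0$) shows it equals the explicit formula above with $X$ replaced by $X^n$. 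Since the explicit formula is continuous in $X$ with respect to the $C^\theta$ norm, one has $u^n\to u$ in $C([0,1],H^{-\beta})$, whence $Au^n\to Au$ in $C([0,1],H^{-\beta-2})$ and one may pass to the limit in $u^n_t-\int_0^t Au^n_s\,ds=X^n_t$ to obtain the weak equation (in $H^{-\beta-2}$, i.e. tested against smooth functions). Uniqueness is then immediate: the difference $v$ of two solutions satisfies $v_t=\int_0^t Av_s\,ds$ with $v_0=0$, i.e. it is the weak solution of the homogeneous Cauchy problem for $A$, hence $v\equiv e^{t A}v_0=0$.

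For the H\"older bound I would split, for $0<s<t<1$, the increment of the two pieces $u^{(1)}_t:=e^{tA}X_t$ and $u^{(2)}_t:=\int_0^t A e^{(t-r)A}(X_r-X_t)\,dr$. Writing $u^{(1)}_t-u^{(1)}_s=e^{tA}(X_t-X_s)+(e^{(t-s)A}-I)e^{sA}X_s$ and estimating the second summand by $\|(e^{(t-s)A}-I)(-A)^{-\theta}\|\,\|(-A)^\theta e^{sA}\|\,\|X_s\|\lesssim|t-s|^\theta\,s^{-\theta}\,s^\theta\|X\|_{C^\theta}$ shows $u^{(1)}\in C^\theta$. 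The term $u^{(2)}$ is the delicate one: after isolating $\int_s^t(\cdots)\lesssim|t-s|^\theta$ and the contribution $\int_0^s A(e^{(t-s)A}-I)e^{(s-r)A}(X_r-X_s)\,dr$, the latter is bounded, via $\|(e^{(t-s)A}-I)(-A)^{-\rho}\|\lesssim|t-s|^\rho$ and $\|(-A)^{1+\rho}e^{(s-r)A}\|\lesssim(s-r)^{-1-\rho}$, by $|t-s|^\rho\int_0^s(s-r)^{\theta-1-\rho}\,dr$, which converges \emph{only} if $\rho<\theta$. This is exactly where a little regularity is sacrificed, and it is what forces $\vartheta<\theta$ rather than $\vartheta=\theta$; collecting the pieces yields $\|u\|_{C^\vartheta([0,1],H^{-\beta})}\lesssim\|X\|_{C^\theta([0,1],H^{-\beta})}$ for every $\vartheta<\theta$. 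I expect the genuine obstacle to be precisely this bookkeeping of the non-integrable diagonal singularity: one must balance the order $\sigma=1+\rho$ of the fractional power of $A$ against the H\"older exponent $\theta$ so that every integral at $r\to s$ (and $r\to t$) stays convergent, and it is this balance that pins down the admissible $\vartheta$. Once these estimates are in place the remaining steps are routine, and the cleanest presentation is to record the construction above and note that it is the specialization of \cite[Theorem 1]{GuLeTi06} to our Hilbert scale.
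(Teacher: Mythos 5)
Your proposal is correct, but it is worth noting that the paper does not prove this lemma at all: it is stated as a recollection of \cite[Theorem 1]{GuLeTi06}, specialized to the Hilbert scale $H^{-\beta}(\T^d)$ and the analytic semigroup $e^{A\cdot}$. What you have written is, in effect, a self-contained proof of that specialization: the explicit mild formula $u_t=e^{tA}X_t+\int_0^t Ae^{(t-r)A}(X_r-X_t)\,dr$ is exactly the Young/sewing representation that \cite{GuLeTi06} works with, your mollification argument replaces their Riemann--Stieltjes construction of the integral, and your fractional-power bookkeeping (the bound $\|(e^{(t-s)A}-I)(-A)^{-\rho}\|\,\|(-A)^{1+\rho}e^{(s-r)A}\|\lesssim (t-s)^{\rho}(s-r)^{-1-\rho}$ with the constraint $\rho<\theta$ for integrability at $r\to s$) is precisely what accounts for the loss from $\theta$ to $\vartheta<\theta$ in the statement. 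The estimates all check out: the $u^{(1)}$ piece correctly uses $\|X_s\|\le s^{\theta}\|X\|_{C^{\theta}}$ (from $X_0=0$) to cancel the $s^{-\theta}$ singularity, and linearity plus uniqueness via \cite{Ba77} close the argument. The citation route buys brevity; your route buys transparency about where the hypothesis $\theta>0$ and the strict inequality $\vartheta<\theta$ enter, and it is a perfectly acceptable substitute. The only cosmetic caveat is that when $\kappa=0$ the operator $A=\mathcal{L}$ need not be strictly negative, so the fractional powers $(-A)^{\rho}$ should be read as $(c-A)^{\rho}$ for some $c>0$; this changes nothing in the estimates on the bounded time interval $[0,1]$.
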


\begin{proof}[Proof of \autoref{thm:main}]
By definition of $f$, the process $T_t$ is almost surely the weak solution of 
\begin{align*}
T_t - T_0 - \int_0^t A T_s ds = f_t,
\end{align*}
whereas $\bar{T}$ solves
\begin{align*}
\bar{T}_t - T_0 - \int_0^t A \bar{T}_s ds = 0.
\end{align*}
Applying \autoref{lem:mild_solution} with $u = T - \bar{T}$ and $X = f$ we get the almost sure inequality
\begin{align} \label{eq:T-barT}
\| T - \bar{T} \|_{C^\vartheta([0,1],H^{-\beta}(\T^d))}
\lesssim
\| f \|_{C^\theta([0,1],H^{-\beta}(\T^d))}.
\end{align}

Recall that, by \autoref{prop:holder_f}, there exists $\theta$ sufficiently small such that, for every $\alpha$ sufficiently large 
\begin{align*}
\E{\|f\|_{C^{\theta}([0,1],H^{-\beta}(\T^d))}}
&\lesssim
\| T_0 \|_{L^2(\T^d)}  
\left(
\epsilon
+\frac{\mu^{2+\gamma}}{\alpha^\varkappa}
\right);
\end{align*}
hence, the thesis of the theorem follows for every $s \geq \beta$, while for $s \in (0,\beta)$, \eqref{eq:energy}, \eqref{eq:T-barT} and interpolation inequality \eqref{eq:interp} yield
\begin{align*}
\E{ 
\| T - \bar{T} \|_{C^\vartheta([0,1],H^{-s}(\T^d))} }
&\leq
\E{ 
\| T_0 \|_{L^2(\T^d)}^{1-s/\beta}
\| T - \bar{T} \|_{C^\vartheta([0,1],H^{-\beta}(\T^d))}^{s/\beta} }
\\
&\lesssim
\E {
\| T_0 \|_{L^2(\T^d)}^{1-s/\beta}
\| f \|_{C^\theta([0,1],H^{-\beta}(\T^d))}^{s/\beta}
}
\\
&\leq
\| T_0 \|_{L^2(\T^d)}^{1-s/\beta} 
\E {
\| f \|_{C^\theta([0,1],H^{-\beta}(\T^d))}
} ^{s/\beta}
\\
&\lesssim
\| T_0 \|_{L^2(\T^d)}  
\left(
\epsilon
+\frac{\mu^{2+\gamma}}{\alpha^\varkappa}
\right)^{s/\beta}.
\end{align*} 
The proof is complete.
\end{proof}

\subsection{Proof of \autoref{thm:enhanced}}
\label{ssec:proof_enhanced}

In this paragraph we are concerned with the proof of 
\autoref{thm:enhanced}, that quantifies dissipation enhancement of $L^2(\T^d)$ for the solution of \eqref{eq:temp} when $\kappa>0$. 
We insist once again that our result states that a suitable velocity field $\uu^\alpha$ can dissipate energy almost \emph{instantaneously}, i.e. for every fixed $t>0$, without the necessity of taking $t$ large enough to have $\E{\|T_t\|_{L^2(\T^d)}}$ below a certain threshold. 

\begin{proof}[Proof of \autoref{thm:enhanced}]
The following proof is mostly inspired by the proof of \cite[Theorem 1.4]{BeBlPu20}.
By energy equality \eqref{eq:energy_equality} and interpolation inequality \eqref{eq:interp}, we have
\begin{align} \label{eq:d/dt}
\frac{d}{dt} \|T_t\|_{L^2(\T^d)}^2
=
-2\kappa \|T_t\|^2_{H^1(\T^d)}
\leq
-2\kappa \frac{\|T_t\|_{L^2(\T^d)}^4}{\|T_t\|^2_{H^{-1}(\T^d)}}.
\end{align}
From the previous inequality it is clear that, in order to control $\| T_t \|_{L^2(\T^d)}$, it is sufficient to have a good bound from above on the quantity $\| T_t \|_{H^{-1}(\T^d)}$.
Notice that the trivial bound $\| T_t \|_{H^{-1}(\T^d)} \leq \| T_t \|_{L^2(\T^d)}$ produces the equally trivial estimate \eqref{eq:energy_decay}. To have a better control on $\| T_t \|_{H^{-1}(\T^d)}$ we will exploit \autoref{thm:main} as follows.

Denote $c = C^{1/2}\left(\epsilon+\frac{\mu^{2+\gamma}}{\alpha^\varkappa}\right)^{\varsigma/2}>0$, as in the statement of the theorem.
By \autoref{thm:main} and Markov inequality we have
\begin{align} \label{eq:markov}
\PP \left\{
\sup_{t \in [0,1]}
\| T_t - \bar T_t \|_{H^{-1}(\T^d)} > 
c \|T_0\|_{L^2(\T^d)}
\right\}
&\leq
c,
\end{align}
hence with probability at least $1-c$ it holds
\begin{align*}
\|T_t\|^2_{H^{-1}(\T^d)} 
\leq
2\|\bar T_t\|^2_{H^{-1}(\T^d)}
+ 
2\|T_t - \bar T_t\|^2_{H^{-1}(\T^d)}
\leq
2\|T_0\|_{L^2(\T^d)}^2
\left( e^{-2\lambda t} + c^2 \right).
\end{align*}
In view of this, \eqref{eq:d/dt} implies on a set of probability at least $1-c$
\begin{align*}
\| T_t \|_{L^2(\T^d)}^2
\leq
\frac{\| T_0 \|_{L^2(\T^d)}^2}
{1+\kappa \int_0^t \frac{ds}{e^{-2\lambda s} + c^2}}
=
\frac{\| T_0 \|_{L^2(\T^d)}^2}
{1+\frac{\kappa}{2 \lambda c^2}\log \left( \frac{c^2 e^{2 \lambda t} + 1}{c^2+1} \right) },
\end{align*}
where in the last equality we have used $c>0$.
This completes the proof of the first part of the theorem. As for the second part, it immediately follows from the a.s. inequality \eqref{eq:energy_decay}, \eqref{eq:markov} and
\begin{align*}
\E{ \| T_t \|_{L^2(\T^d)} }
&=
\E{ \| T_t \|_{L^2(\T^d)} 
\mathbf{1}_{\left\{
\sup_{t \in [0,1]}
\| T_t - \bar T_t \|_{H^{-1}(\T^d)} > 
c \|T_0\|_{L^2(\T^d)}
\right\}} }
\\
&\quad
+
\E{ \| T_t \|_{L^2(\T^d)} 
\mathbf{1}_{\left\{
\sup_{t \in [0,1]}
\| T_t - \bar T_t \|_{H^{-1}(\T^d)} \leq c \|T_0\|_{L^2(\T^d)}
\right\}} }.
\end{align*}
\end{proof}

\begin{remark}
Looking back at \eqref{eq:d/dt} one realizes that an alternative approach could be that of producing a lower bound for $\| T_t \|_{H^{1}(\T^d)}$ instead of an upper bound for $\| T_t \|_{H^{-1}(\T^d)}$.
In order to do this, we present an heuristic argument. Write
\begin{align*}
\| T_t \|^2_{H^{1}(\T^d)}
&=
\| \pi_N T_t \|^2_{H^{1}(\T^d)} +
\| (I-\pi_N) T_t \|^2_{H^{1}(\T^d)}
\\
&\geq
\| \pi_N T_t \|^2_{L^2(\T^d)} +
N^2 \| (I-\pi_N) T_t \|^2_{L^2(\T^d)}
\\
&=
(1-N^2)\| \pi_N T_t \|^2_{L^2(\T^d)} +
N^2 \| T_t \|^2_{L^2(\T^d)},
\end{align*}
where $\pi_N$ denotes the Fourier projector onto modes $|k| \leq N$, $N \in \N$.
Plugging into \eqref{eq:d/dt}, and assuming $N^2 \gg \kappa^{-1}$, we have formally
\begin{align} \label{eq:aux_integral}
\|T_t\|_{L^2(\T^d)}^2
&\leq
e^{-2\kappa N^2 t} \|T_0\|_{L^2(\T^d)}^2
+
\int_0^t
e^{-2\kappa N^2 (t-s)}
2\kappa (N^2-1) \| \pi_N T_s \|^2_{L^2(\T^d)} ds
\\
&\sim \nonumber
e^{-2\kappa N^2 t} \|T_0\|_{L^2(\T^d)}^2
+
\| \pi_N T_t \|^2_{L^2(\T^d)}
.
\end{align}
Using the inequality above with $N^2=\lambda/\kappa$, $\lambda\gg 1$ and recalling \autoref{rmk:transfer} 
\begin{align*}
\E{ \| \pi_N T_t \|_{L^2(\T^d)}}
&\lesssim
N \|T_0\|_{L^2(\T^d)}
\left( e^{-\lambda t} + \left( \epsilon +\frac{\mu^{2+\gamma}}{\alpha^\varkappa} \right)^\varsigma \right),
\end{align*}
gives
\begin{align*}
\E{\|T_t\|_{L^2(\T^d)}}
\lesssim
\frac{\lambda^{1/2}}{\kappa^{1/2}}\left( e^{-\lambda t} + \left( \epsilon +\frac{\mu^{2+\gamma}}{\alpha^\varkappa} \right)^\varsigma \right)
\|T_0\|_{L^2(\T^d)}.
\end{align*}
Comparing with \eqref{eq:enhanced_remark}, the previous estimate has in addition the term $\left( \epsilon +\frac{\mu^{2+\gamma}}{\alpha^\varkappa} \right)^\varsigma$ on the right hand side, and the implicit constant in the inequality.
Moreover, it has been recovered assuming $\lambda \gg 1$ in order to approximate the time integral in \eqref{eq:aux_integral} with $\| \pi_N T_t \|^2_{L^2(\T^d)}$.
On the other hand, the statement of \autoref{thm:enhanced}, and in particular \eqref{eq:dissipation}, is valid for every value of $\lambda$.
\end{remark}

\section{Proof of \autoref{prop:discr_holder}}
\label{sec:proof}

In this section we give the proof of \autoref{prop:discr_holder}. Recall that we are interested in the expression \eqref{eq:discr_holder}, given by 
\begin{align*}
\sup_{\substack{n,m=1,\dots,1/\delta-1\\n>m}}
(n\delta-m\delta)^{-\theta}
\left| 
\langle \phi, T_{n\delta} \rangle
-
\langle \phi, T_{m\delta} \rangle
-
\delta \sum_{k=m}^{n-1}
\langle A \phi, T_{k\delta} \rangle 
\right|,
\end{align*}
where $\phi \in \mathcal{D}(\T^d)$ is a smooth test function, $\theta>0$ and $A=\kappa\Delta + \mathcal{L}$.

The choice of the parameter $\delta$ in the expression above is crucial. We will see that, in order to have the result of \autoref{prop:discr_holder}, the parameters $\delta$ and $\alpha$ must satisfy very particular relations.

\subsection{A convenient decomposition}
In order to control \eqref{eq:discr_holder}, we first consider the quantity
\begin{align*}
\langle \phi, T_{n\delta} \rangle
-
\langle \phi, T_{m\delta} \rangle
-
\delta \sum_{k=m}^{n-1}
\langle A \phi, T_{k\delta} \rangle,
\qquad n>m,
\end{align*}
or equivalently
\begin{align*}
\langle \phi, T_{(n+1)\delta} \rangle
-
\langle \phi, T_{m\delta} \rangle
-
\delta \sum_{k=m}^{n}
\langle A \phi, T_{k\delta} \rangle,
\qquad n \geq m.
\end{align*}
Let us preliminarily rewrite the previous expression in a more convenient way.
For every $k=0,\dots,1/\delta-1$ it holds
\begin{align} \label{eq:incr01}
\langle \phi, T_{(k+1)\delta}  \rangle
-
\langle \phi, T_{k\delta} \rangle 
&=  
\int_{k\delta}^{(k+1)\delta}
\langle \uu^\alpha(s) \cdot \nabla \phi,  T_s \rangle ds
+ \kappa
\int_{k\delta}^{(k+1)\delta}
\langle \Delta\phi, T_s \rangle ds  
\\
&= \nonumber
I_1(k) + I_2(k).
\end{align}

Let us further decompose (cfr. for instance \cite[Equation 18]{AsFlPa20+})
\begin{align*}
I_1(k) &= 
\int_{k\delta}^{(k+1)\delta}
\langle \uu^\alpha(s) \cdot \nabla \phi,  T_s \rangle ds
\\
&=
\int_{k\delta}^{(k+1)\delta}
\langle \uu^\alpha(s) \cdot \nabla \phi,  
(T_s-T_{k\delta}) \rangle ds
+
\int_{k\delta}^{(k+1)\delta}
\langle \uu^\alpha(s) \cdot \nabla \phi,  
T_{k\delta} \rangle ds
\\
&=
\int_{k\delta}^{(k+1)\delta}
\int_{k\delta}^s
\langle \uu^\alpha(r) \cdot \nabla
(\uu^\alpha(s) \cdot \nabla \phi),  
(T_r-T_{k\delta}) \rangle dr ds
\\
&\quad
+
\int_{k\delta}^{(k+1)\delta}
\int_{k\delta}^s
\langle \uu^\alpha(r) \cdot \nabla
(\uu^\alpha(s) \cdot \nabla \phi),  
T_{k\delta} \rangle dr ds
\\
&\quad
+ \kappa
\int_{k\delta}^{(k+1)\delta}
\int_{k\delta}^s
\langle \Delta(\uu^\alpha(s) \cdot \nabla \phi),  
T_r \rangle dr ds
\\
&\quad
+
\int_{k\delta}^{(k+1)\delta}
\langle \uu^\alpha(s) \cdot \nabla \phi,  
T_{k\delta} \rangle ds
\\
&=
I_{11}(k) + I_{12}(k) + I_{13}(k) + I_{14}(k),
\end{align*}
where we have used $\uu^\alpha(s) \cdot \nabla \phi \in \mathcal{D}(\T^d)$ as a test function in order to pass from the second to the third line. 
Indeed, since $\dvg \uu^\alpha(s) = 0$ for every $s \in [0,1]$ we have
\begin{align*}
\langle 1 , \uu^\alpha(s) \cdot \nabla \phi \rangle
=
\langle 1 , \dvg (\uu^\alpha(s) \phi) \rangle
= -
\langle \nabla 1 , \uu^\alpha(s) \phi \rangle
= 0.
\end{align*}
The term $I_{12}(k)$ can be rewritten as follows:
\begin{align*}
I_{12}(k)
&=
\int_{k\delta}^{(k+1)\delta}
\int_{k\delta}^s
\langle \uu^\alpha(r) \cdot \nabla
(\uu^\alpha(s) \cdot \nabla \phi),  
T_{k\delta} \rangle dr ds
\\
&=
\sum_{j,j' \in J}
\int_{k\delta}^{(k+1)\delta}
\int_{k\delta}^s
\langle \uu_{j'} \eta^{j',\alpha}(r) \cdot \nabla
(\uu_{j} \eta^{j,\alpha}(s) \cdot \nabla \phi),  
T_{k\delta} \rangle dr ds
\\
&=
\sum_{j,j' \in J}
\langle \uu_{j'}  \cdot \nabla
(\uu_{j} \cdot \nabla \phi),  
T_{k\delta} \rangle 
\int_{k\delta}^{(k+1)\delta}
\int_{k\delta}^s
\eta^{\alpha,j}(s)
\eta^{\alpha,j'}(r)
dr ds
\\
&=
\sum_{j,j' \in J}
\langle \uu_{j'}  \cdot \nabla
(\uu_{j} \cdot \nabla \phi),  
T_{k\delta} \rangle 
\left(
\int_{k\delta}^{(k+1)\delta}
\int_{k\delta}^s
\eta^{\alpha,j}(s)
\eta^{\alpha,j'}(r)
dr ds - \delta_{j,j'}\frac{\delta}{2} \right)
\\
&\quad+
\delta
\langle \mathcal{L} \phi, T_{k\delta} \rangle 
\\
&=
I_{121}(k) + I_{122}(k)
.
\end{align*}
As for the term $I_2(k)$ in \eqref{eq:incr01}, we have
\begin{align*}
I_{2}(k)
&=
\kappa
\int_{k\delta}^{(k+1)\delta}
\langle \Delta \phi, T_s \rangle ds
\\
&=
\kappa
\int_{k\delta}^{(k+1)\delta}
\langle \Delta \phi, (T_s - T_{k\delta} )\rangle ds
+\kappa
\int_{k\delta}^{(k+1)\delta}
\langle \Delta \phi, T_{k\delta} \rangle ds
\\
&=
I_{21}(k)+I_{22}(k).
\end{align*}

Taking the sum of \eqref{eq:incr01} over $k=m,\dots,n$ we get
\begin{align} \label{eq:incr02}
\langle \phi, T_{(n+1)\delta} \rangle
&-
\langle \phi, T_m \rangle
-
\delta \sum_{k=m}^{n}
\langle A \phi, T_{k\delta} \rangle
\\
&=
\sum_{k=m}^{n}
\left( 
I_{11}(k) + I_{121}(k) + I_{13}(k) + I_{14}(k) + I_{21}(k)
\right). \nonumber
\end{align}

\subsection{Controlling the terms $I_{11}(k)$, $I_{13}(k)$ and $I_{21}(k)$}

\begin{lemma} \label{lem:negligible}
Let $d \geq 3$ and $\gamma \in (0,(d-2)/6)$.
Let $\delta>0$ be such that
$\delta\alpha \log(1+\alpha)> 1$ and $\mu \delta^{4/3} \alpha \log^{1/3}(1+\alpha)< 1$.
Then the following estimates hold:
\begin{align*}
\mathbb{E} \left[
\sup_{\substack{n,m=1,\dots,1/\delta-1\\n\geq m}} \left|
\sum_{k=m}^{n} I_{11}(k)
\right| \right] 
&\lesssim
\left\| \phi \right\|_{H^{2+3\gamma}(\T^d)}
\left\| T_0 \right\|_{L^2(\T^d)} 
\mu^{2+\gamma}
\delta^{1+\gamma} 
\alpha^{1+\gamma/2} \log^{1+\gamma/2}(1+\alpha)
;
\\
\mathbb{E} \left[
\sup_{\substack{n,m=1,\dots,1/\delta-1\\n\geq m}} \left|
\sum_{k=m}^{n} I_{13}(k)
\right| \right]
&\lesssim
\|  \phi \|_{H^{2+2\gamma}(\T^d)}
\|T_0\|_{L^2(\T^d)}
\mu 
\delta^{(1+\gamma)/2}
\alpha^{1/2} \log^{1/2}(1+\alpha) 
; 
\\
\mathbb{E} \left[
\sup_{\substack{n,m=1,\dots,1/\delta-1\\n\geq m}} \left|
\sum_{k=m}^{n} I_{21}(k)
\right| \right]
&\lesssim
\| \phi \|_{H^{2+\gamma}(\T^d)}
\|T_0\|_{L^2(\T^d)}
\mu^\gamma 
\delta^{\gamma}
\alpha^{\gamma/2} \log^{\gamma/2}(1+\alpha). 
\end{align*}
\end{lemma}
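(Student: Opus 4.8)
The plan is to reduce each of the three estimates to a pathwise bound on a single summand and then sum the $1/\delta$ contributions. For any of the three families, since $n\geq m$ range over $\{1,\dots,1/\delta-1\}$, the triangle inequality gives the pathwise bound
\begin{align*}
\sup_{\substack{n,m=1,\dots,1/\delta-1\\n\geq m}}\left|\sum_{k=m}^{n}I(k)\right|
\leq
\sum_{k=0}^{1/\delta-1}|I(k)|,
\end{align*}
so it suffices to estimate $\E{\sum_{k}|I(k)|}$ for each of $I\in\{I_{11},I_{13},I_{21}\}$. Since every $I(k)$ is an integral over the single subinterval $[k\delta,(k+1)\delta]$, a per-interval factor $\delta^{a}$ becomes $\delta^{a-1}$ after summing the $1/\delta$ pieces, and this is the mechanism producing the stated powers of $\delta$. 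Throughout I would apply \autoref{lem:increments_T} with $\delta_\star=\delta$, legitimate since $\delta\alpha\log(1+\alpha)>1$.

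For the increment-type terms I would pair the time increment, placed in $H^{-\gamma}(\T^d)$ via the interpolation inequality \eqref{eq:interp} between $H^{-1}$ and $L^2$, against a smooth factor in $H^{\gamma}(\T^d)$. For $I_{21}(k)=\kappa\int_{k\delta}^{(k+1)\delta}\langle\Delta\phi,T_s-T_{k\delta}\rangle\,ds$ this reads
\begin{align*}
|\langle\Delta\phi,T_s-T_{k\delta}\rangle|
\leq
\|\phi\|_{H^{2+\gamma}(\T^d)}\,\|T_s-T_{k\delta}\|_{H^{-1}(\T^d)}^{\gamma}\,\|T_s-T_{k\delta}\|_{L^2(\T^d)}^{1-\gamma},
\end{align*}
after which I bound the $L^2$ factor by $2\|T_0\|_{L^2(\T^d)}$ using \eqref{eq:energy}, use $\kappa\leq1$, and take expectation via \autoref{lem:increments_T} and Jensen (the exponent $\gamma<1$); recalling $\sum_{k}\int_{k\delta}^{(k+1)\delta}ds=1$ yields the factor $\mu^{\gamma}\delta^{\gamma}\alpha^{\gamma/2}\log^{\gamma/2}(1+\alpha)$. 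The term $I_{11}$ follows the same philosophy with two velocity factors: pairing $T_r-T_{k\delta}\in H^{-\gamma}$ against $\uu^\alpha(r)\cdot\nabla(\uu^\alpha(s)\cdot\nabla\phi)\in H^{\gamma}$ and estimating the latter by two applications of \autoref{lem:prod_bess} (first $\uu^\alpha(s)\cdot\nabla\phi\in H^{1+2\gamma}$ from $\uu^\alpha(s)\in H^{d/2-\gamma}$ and $\nabla\phi\in H^{1+3\gamma}$, then multiplying by $\uu^\alpha(r)\in H^{d/2-\gamma}$ to land in $H^{\gamma}$) gives
\begin{align*}
|I_{11}(k)|
\lesssim
\|\phi\|_{H^{2+3\gamma}(\T^d)}\,\|T_0\|_{L^2(\T^d)}^{1-\gamma}\,U^2 V^{\gamma}\,\delta^2,
\end{align*}
where $U=\sup_{s}\|\uu^\alpha(s)\|_{H^{d/2-\gamma}(\T^d)}$ and $V=\sup_{t+\delta'\leq1,\,\delta'\leq\delta}\|T_{t+\delta'}-T_t\|_{H^{-1}(\T^d)}$. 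Summing over $k$ (gaining $\delta^{-1}$) and decoupling the correlated randomness by Cauchy--Schwarz, $\E{U^2V^\gamma}\leq\E{U^4}^{1/2}\E{V^{2\gamma}}^{1/2}$, I then invoke \eqref{eq:sup_u1} for $U$ and \autoref{lem:increments_T} (with Jensen) for $V$ to obtain precisely $\mu^{2+\gamma}\delta^{1+\gamma}\alpha^{1+\gamma/2}\log^{1+\gamma/2}(1+\alpha)$.

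For $I_{13}(k)=\kappa\int_{k\delta}^{(k+1)\delta}\int_{k\delta}^{s}\langle\Delta(\uu^\alpha(s)\cdot\nabla\phi),T_r\rangle\,dr\,ds$ the scalar $T_r$ appears without an increment, so instead I would spend the energy estimate \eqref{eq:energy} in its $L^2([0,1],H^1)$ form. I bound $|\langle\Delta(\uu^\alpha(s)\cdot\nabla\phi),T_r\rangle|\leq\|\uu^\alpha(s)\cdot\nabla\phi\|_{H^{1+\gamma}(\T^d)}\|T_r\|_{H^{1-\gamma}(\T^d)}$, place $\uu^\alpha(s)\cdot\nabla\phi\in H^{1+\gamma}$ by \autoref{lem:prod_bess} (which requires $\phi\in H^{2+2\gamma}$), interpolate $\|T_r\|_{H^{1-\gamma}}\leq\|T_0\|_{L^2(\T^d)}^{\gamma}\|T_r\|_{H^1(\T^d)}^{1-\gamma}$, and absorb $\int_0^1\|T_r\|_{H^1(\T^d)}^2\,dr\leq\frac{1}{2\kappa}\|T_0\|_{L^2(\T^d)}^2$ against the prefactor $\kappa$ through H\"older in $r$ and then in $k$. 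The leftover power $\kappa^{(1+\gamma)/2}\leq1$ renders the bound uniform in $\kappa\in[0,1]$, and the $\delta$-counting delivers at least the claimed $\delta^{(1+\gamma)/2}$, since $\delta\leq\delta^{(1+\gamma)/2}$ for $\delta\leq1$.

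The hard part is the Sobolev bookkeeping for $I_{11}$: one must arrange for the bilinear expression $\uu^\alpha(r)\cdot\nabla(\uu^\alpha(s)\cdot\nabla\phi)$ to sit in a space of \emph{positive} regularity $H^{\gamma}$, dual to the $H^{-\gamma}$ in which the increment is controlled. Chaining \autoref{lem:prod_bess} twice costs $3\gamma$ derivatives on $\phi$ and, crucially, forces the intermediate index $1+3\gamma$ to stay below $d/2$; this is exactly the constraint $\gamma\in(0,(d-2)/6)$ in the statement, and it is where the hypothesis $d\geq3$ is genuinely used. A secondary difficulty is the $\kappa$-uniformity of $I_{13}$: one cannot simply bound $\|T_r\|_{L^2(\T^d)}$ by $\|T_0\|_{L^2(\T^d)}$ without destroying the gain, so the $L^2H^1$ energy must be spent so that the residual power of $\kappa$ stays nonnegative.
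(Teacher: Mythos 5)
Your proposal is correct and follows essentially the same route as the paper: reduce the supremum to $\sum_k\E{|I(k)|}$, estimate each summand pathwise via \autoref{lem:prod_bess}, the interpolation inequality \eqref{eq:interp} and the energy bound \eqref{eq:energy}, and then conclude with \eqref{eq:sup_u1} and \autoref{lem:increments_T} (the paper spends the $L^2H^1$ energy against the prefactor $\kappa$ for $I_{13}$ exactly as you do, arriving at the same residual $\kappa^{(1+\gamma)/2}$). The only cosmetic difference is in $I_{11}$: the paper pairs $\|\uu^\alpha(s)\cdot\nabla\phi\|_{H^{1+2\gamma}}$ against $\|\uu^\alpha(r)(T_r-T_{k\delta})\|_{H^{-2\gamma}}$ (applying \autoref{lem:prod_bess} with one negative index), whereas you place the full bilinear velocity expression in $H^{\gamma}$; both lead to the same constraint $1+3\gamma<d/2$ and the same bound.
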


\begin{proof}

Throughout the proof, we will use without explicit mention the following key inequality:
\begin{align*} 
\mathbb{E} \left[
\sup_{\substack{n,m=1,\dots,1/\delta-1\\n\geq m}} \left|
\sum_{k=m}^{n} I(k)
\right| \right] 
\leq
\sum_{k=1}^{1/\delta-1} 
\mathbb{E} \left[
 \left|
I(k)
\right| \right] ,
\quad
I = I_{11}, I_{13}, I_{21}.
\end{align*}

Let us start from $I_{11}(k)$:
\begin{align*}
I_{11}(k) =
\int_{k\delta}^{(k+1)\delta}
\int_{k\delta}^s
\langle \uu^\alpha(r) \cdot \nabla
(\uu^\alpha(s) \cdot \nabla \phi),  
(T_r-T_{k\delta}) \rangle dr ds.
\end{align*}
Using \autoref{lem:prod_bess} and \eqref{eq:interp}, for every $\gamma \in (0,(d-2)/6)$
\begin{align*}
|I_{11}(k)| 
&\leq
\int_{k\delta}^{(k+1)\delta}
\int_{k\delta}^s
\left\|  \uu^\alpha(s) \cdot \nabla \phi 
\right\|_{H^{1+2\gamma}(\T^d)}  
\left\| \uu^\alpha(r) (T_r- T_{k\delta}) \right\|_{H^{-2\gamma}(\T^d)} dr ds
\\
&\lesssim
\int_{k\delta}^{(k+1)\delta}
\int_{k\delta}^s
\left\|  \uu^\alpha(s) \right\|_{H^{d/2-\gamma}(\T^d)}
\left\| \phi \right\|_{H^{2+3\gamma}(\T^d)} 
\left\|  \uu^\alpha(r) \right\|_{H^{d/2-\gamma}(\T^d)} 
\left\| (T_r- T_{k\delta}) \right\|_{H^{-\gamma}(\T^d)} dr ds
\\
&\leq 
\left\| \phi \right\|_{H^{2+3\gamma}(\T^d)} 
\sup_{s \in [0,1]} \left\|  \uu^\alpha(s) \right\|_{H^{d/2-\gamma}(\T^d)}^2
\left\| T_0 \right\|_{L^2(\T^d)}^{1-\gamma}
\int_{k\delta}^{(k+1)\delta}
\int_{k\delta}^s
\left\| (T_r- T_{k\delta}) \right\|_{H^{-1}(\T^d)}^{\gamma} dr ds
. 
\end{align*}
Hence \autoref{lem:increments_T} and \eqref{eq:sup_u1} yield
\begin{align*}
\mathbb{E} \left[
\sup_{\substack{n,m=1,\dots,1/\delta-1\\n\geq m}} \left|
\sum_{k=m}^{n} I_{11}(k)
\right| \right] 
&\leq
\sum_{k=1}^{1/\delta-1}
\mathbb{E} \left[
\left| I_{11}(k) \right| \right] 
\\
&\lesssim 
\left\| \phi \right\|_{H^{2+3\gamma}(\T^d)}
\left\| T_0 \right\|_{L^2(\T^d)} 
\mu^{2+\gamma}
\delta^{1+\gamma} 
\alpha^{1+\gamma/2} \log^{1+\gamma/2}(1+\alpha)
.
\end{align*}

As for the term $I_{13}(k)$ we have
\begin{align*}
I_{13}(k)
&=
\kappa
\int_{k\delta}^{(k+1)\delta}
\int_{k\delta}^s
\langle \Delta(\uu^\alpha(s) \cdot \nabla \phi), T_r \rangle dr ds.
\end{align*}
By \autoref{lem:prod_bess}, H\"older inequality and \eqref{eq:energy}, for every $\gamma \in (0,(d-2)/4)$
\begin{align*}
|I_{13}(k)|
&\leq \kappa
\int_{k\delta}^{(k+1)\delta}
\int_{k\delta}^s
\| \uu^\alpha(s) \cdot \nabla \phi \|_{H^{1+\gamma}(\T^d)} 
\| T_r \|_{H^{1-\gamma}(\T^d)} dr ds
\\
&\lesssim \kappa
\int_{k\delta}^{(k+1)\delta}
\int_{k\delta}^s
\| \uu^\alpha(s) \|_{H^{d/2-\gamma}(\T^d)}
\|  \phi \|_{H^{2+2\gamma}(\T^d)} 
\|T_r \|_{H^{1-\gamma}(\T^d)} dr ds
\\
&\leq \kappa 
\|  \phi \|_{H^{2+2\gamma}(\T^d)} 
\sup_{s \in [0,1]} \| \uu^\alpha(s) \|_{H^{d/2-\gamma}(\T^d)}
\|T_0\|_{L^2(\T^d)}^{\gamma}
\int_{k\delta}^{(k+1)\delta}
\int_{k\delta}^s
\|T_r \|_{H^{1}(\T^d)}^{1-\gamma} dr ds
\\
&\leq \kappa^{(1+\gamma)/2} 
\|  \phi \|_{H^{2+2\gamma}(\T^d)} 
\sup_{s \in [0,1]} \| \uu^\alpha(s) \|_{H^{d/2-\gamma}(\T^d)}
\|T_0\|_{L^2(\T^d)}
\delta^{(3+\gamma)/2}.
\end{align*}
Hence using \eqref{eq:sup_u1}
\begin{align*} 
\mathbb{E} \left[
\sup_{\substack{n,m=1,\dots,1/\delta-1\\n\geq m}} \left|
\sum_{k=m}^{n} I_{13}(k)
\right| \right] 
&\leq
\sum_{k=1}^{1/\delta-1}
\mathbb{E} \left[
\left| I_{13}(k) \right| \right] 
\\
&\lesssim
\|  \phi \|_{H^{2+2\gamma}(\T^d)}
\|T_0\|_{L^2(\T^d)}
\mu 
\delta^{(1+\gamma)/2}
\alpha^{1/2} \log^{1/2}(1+\alpha) 
.
\end{align*}

Let us move finally to the term $I_{21}(k)$:
\begin{align*}
I_{21}(k) &= 
\kappa
\int_{k\delta}^{(k+1)\delta}
\langle \Delta \phi , (T_s-T_{k\delta})
\rangle ds.
\end{align*}
By \eqref{eq:interp} we have
\begin{align*}
|I_{21}(k)|
&\leq \kappa
\int_{k\delta}^{(k+1)\delta}
\| \phi \|_{H^{2+\gamma}(\T^d)}
\| T_s-T_{k\delta} \|_{H^{-\gamma}(\T^d)} ds
\\
&\leq \kappa \|T_0\|_{L^2(\T^d)}^{1-\gamma}
\int_{k\delta}^{(k+1)\delta}
\| \phi \|_{H^{2+\gamma}(\T^d)}
\| T_s-T_{k\delta} \|^\gamma_{H^{-1}(\T^d)} ds.
\end{align*}
Hence using \eqref{eq:sup_u2} and \autoref{lem:increments_T} we obtain
\begin{align*}
\mathbb{E} \left[
\sup_{\substack{n,m=1,\dots,1/\delta-1\\n\geq m}} \left|
\sum_{k=m}^{n} I_{21}(k)
\right| \right] 
&\leq
\sum_{k=1}^{1/\delta-1}
\mathbb{E} \left[
\left| I_{21}(k) \right| \right] 
\\
&\lesssim
\| \phi \|_{H^{2+\gamma}(\T^d)}
\|T_0\|_{L^2(\T^d)}
\mu^\gamma 
\delta^{\gamma}
\alpha^{\gamma/2} \log^{\gamma/2}(1+\alpha).
\end{align*}
\end{proof}

\subsection{Controlling the term $I_{121}(k)$}

The term $I_{121}(k)$ requires more care.
We will deduce estimates for this term using a martingale argument due to Nakao, that can be found for instance in \cite{IkWa81}.
Recall
\begin{align*}
I_{121}(k)
=
\sum_{j,j' \in J}
\langle \uu_{j'}  \cdot \nabla
(\uu_{j} \cdot \nabla \phi),  
T_{k\delta} \rangle 
\left(
\int_{k\delta}^{(k+1)\delta}
\int_{k\delta}^s
\eta^{\alpha,j}(s)
\eta^{\alpha,j'}(r)
dr ds - \delta_{j,j'}\frac{\delta}{2} \right).
\end{align*}
Define the following quantity
\begin{align*}
c_{j,j'}(k)
&=
\int_{k\delta}^{(k+1)\delta}
\int_{k\delta}^s
\eta^{\alpha,j}(s)
\eta^{\alpha,j'}(r)
dr ds.
\end{align*}
By the explicit expression of the Ornstein-Uhlenbeck process \eqref{eq:explicit_OU}, the conditional expectation of $c_{j,j'}(k)$ with respect to $\mathcal{F}_{k\delta}$ gives 
\begin{align*}
\mathbb{E} \left[
c_{j,j'}(k)
\mid \mathcal{F}_{k\delta} \right]
&=
\int_{k\delta}^{(k+1)\delta}
\int_{k\delta}^s
\mathbb{E} \left[
\eta^{\alpha,j}(s)
\eta^{\alpha,j'}(r)
\mid \mathcal{F}_{k\delta} \right]dr ds
\\
&=
\eta^{\alpha,j}(k\delta)
\eta^{\alpha,j'}(k\delta)
\int_{k\delta}^{(k+1)\delta}
\int_{k\delta}^s
e^{-\alpha(s-k\delta)}
e^{-\alpha(r-k\delta)}
dr ds
\\
&\quad
+
\delta_{j,j'}
\int_{k\delta}^{(k+1)\delta}
\int_{k\delta}^s
\mathbb{E} \left[
\alpha^2
\int_{k\delta}^s
e^{-\alpha(s-s')}  dW^{j}_{s'}
\int_{k\delta}^r
e^{-\alpha(r-r')} dW^{j}_{r'}
\right] dr ds
\\
&=
\eta^{\alpha,j}(k\delta)
\eta^{\alpha,j'}(k\delta)
\int_{k\delta}^{(k+1)\delta}
\int_{k\delta}^s
e^{-\alpha(s-k\delta)}
e^{-\alpha(r-k\delta)}
dr ds
\\
&\quad
+
\delta_{j,j'}
\alpha^2
\int_{k\delta}^{(k+1)\delta}
\int_{k\delta}^s
\int_{k\delta}^r
e^{-\alpha(s-r')} 
e^{-\alpha(r-r')}  d{r'}
dr ds
\\
&=
\eta^{\alpha,j}(k\delta)
\eta^{\alpha,j'}(k\delta)
\frac{\alpha^{-2}}{2}
\left(
1-e^{-\alpha\delta}
\right)^2
\\
&\quad
+
\delta_{j,j'}
\left(
\frac{\delta}{2} +
\alpha^{-1} \left(
e^{-\alpha\delta} - 1
+ \frac{1}{4} 
\left(1- e^{-2\alpha \delta} \right) \right) \right).
\end{align*}
We introduce now the following auxiliary processes:
\begin{align*}
M_n
&=
\sum_{k=1}^{n-1}
\sum_{j,j' \in J}
\langle \uu_{j'}  \cdot \nabla
(\uu_{j} \cdot \nabla \phi),  
T_{k\delta} \rangle 
\left( c_{j,j'}(k)
- \mathbb{E} \left[ 
c_{j,j'}(k) \mid \mathcal{F}_{k\delta}\right] \right).
\\
R_n
&=
\sum_{k=1}^{n-1}
\sum_{j,j' \in J}
\langle \uu_{j'}  \cdot \nabla
(\uu_{j} \cdot \nabla \phi),  
T_{k\delta} \rangle 
\left( \mathbb{E} \left[ 
c_{j,j'}(k) \mid \mathcal{F}_{k\delta}\right] - 		\delta_{j,j'}\frac{\delta}{2} \right).
\end{align*}
Since $T_{k\delta}$ is $\mathcal{F}_{k\delta}$-measurable, the process $\{M_n\}_{n=1,\dots,1/\delta}$ is a discrete martingale with respect to the filtration $\mathcal{G}_n \coloneqq \mathcal{F}_{(n-1)\delta}$ with initial condition $M_1=0$. By Doob maximal inequality and the martingale property we have the following
\begin{align*}
&\mathbb{E} \left[ 
\sup_{n = 1, \dots, 1/\delta}
M_n^2 \right]
\lesssim
\mathbb{E} \left[ M_{1/\delta}^2 \right]
=
\sum_{k=1}^{1/\delta-1}
\mathbb{E} \left[
|M_{k+1}- M_{k}|^2 \right]
\\
&\quad=
\sum_{k=1}^{1/\delta-1}
\E{
\left|
\sum_{j,j' \in J}
\langle \uu_{j'}  \cdot \nabla
(\uu_{j} \cdot \nabla \phi),  
T_{k\delta} \rangle
\left( c_{j,j'}(k)
- \mathbb{E} \left[ 
c_{j,j'}(k) \mid \mathcal{F}_{k\delta}\right] \right)
\right|^2
}
.
\end{align*}
Using the inequality, valid for $\gamma \in (0,(d-2)/2)$,
\begin{align*}
\left|
\langle \uu_{j'}  \cdot \nabla
(\uu_{j} \cdot \nabla \phi),  
T_{k\delta} \rangle
\right|
&\leq
\| \uu_j \cdot \nabla \phi \|_{H^{1}(\T^d)}
\| \uu_{j'} T_{k\delta} \|_{L^2(\T^d)}
\\
&\lesssim
\| \uu_j \|_{H^{d/2-\gamma}(\T^d)}
\| \phi \|_{H^{2+\gamma}(\T^d)}
\| \uu_{j'} \|_{L^\infty(\T^d)}
\| T_0 \|_{L^2(\T^d)},
\end{align*}
we arrive at the following estimate
\begin{align*}
&\left|
\sum_{j,j' \in J}
\langle \uu_{j'}  \cdot \nabla
(\uu_{j} \cdot \nabla \phi),  
T_{k\delta} \rangle 
\left( c_{j,j'}(k)
- \E {
c_{j,j'}(k) \mid \mathcal{F}_{k\delta}
} \right)
\right|^2
\\
&\lesssim
\sum_{j,j' \in J}
\| \phi \|_{H^{2+\gamma}(\T^d)}^2
\| T_0 \|_{L^2(\T^d)}^2
\| \uu_j \|_{H^{d/2-\gamma}(\T^d)}^2
\| \uu_{j'} \|_{L^\infty(\T^d)}^2
\\
&\qquad \times
\sum_{j,j' \in J}
\left( c_{j,j'}(k)
- \mathbb{E} \left[ 
c_{j,j'}(k) \mid \mathcal{F}_{k\delta}\right] \right)^2
\\
&=
\| \phi \|_{H^{2+\gamma}(\T^d)}
\| T_0 \|_{L^2(\T^d)}
\sum_{j \in J}
\| \uu_j \|_{H^{d/2-\gamma}(\T^d)}^2
\sum_{j' \in J}
\| \uu_{j'} \|_{L^\infty(\T^d)}^2
\\
&\qquad \times 
\sum_{j,j' \in J}
\left( c_{j,j'}(k)
- \mathbb{E} \left[ 
c_{j,j'}(k) \mid \mathcal{F}_{k\delta}\right] \right)^2.
\end{align*}
Since the conditional expectation is a $L^2(\Omega)$-projection,
\begin{align*}
\mathbb{E} \left[ 
\sup_{n = 1, \dots, 1/\delta}
|M_n| \right]
&\leq
\mathbb{E} \left[ 
\sup_{n = 1, \dots, 1/\delta}
M_n^2 \right]^{1/2}
\\
&\quad\lesssim
\| \phi \|_{H^{2+\gamma}(\T^d)}
\| T_0 \|_{L^2(\T^d)} 
\left(
\sum_{j \in J}
\| \uu_j \|_{H^{d/2-\gamma}(\T^d)}^2
\sum_{j' \in J}
\| \uu_{j'} \|_{L^\infty(\T^d)}^2
\right)^{1/2}
\\
&\qquad \times
\left(
\sum_{k=1}^{1/\delta-1}
\sum_{j,j' \in J}
\E{
\left( c_{j,j'}(k)
- \mathbb{E} \left[ 
c_{j,j'}(k) \mid \mathcal{F}_{k\delta}\right] \right)^2
}
\right)^{1/2}
\\
&\quad\leq
\| \phi \|_{H^{2+\gamma}(\T^d)}
\| T_0 \|_{L^2(\T^d)} 
\left(
\sum_{j \in J}
\| \uu_j \|_{H^{d/2-\gamma}(\T^d)}^2
\sum_{j' \in J}
\| \uu_{j'} \|_{L^\infty(\T^d)}^2
\right)^{1/2}
\\
&\qquad \times
\left(
\sum_{k=1}^{1/\delta-1}
\sum_{j,j' \in J}
\E{
c_{j,j'}(k)^2
}
\right)^{1/2}
\\
&\quad\lesssim
\| \phi \|_{H^{2+\gamma}(\T^d)}
\| T_0 \|_{L^2(\T^d)}
\mu^2
\delta \alpha^{1/2} \log^{1/2}(1+\alpha),
\end{align*}
where the last inequality follows from
\begin{align*}
\E{
c_{j,j'}(k)^2
}
&=
\E{
\left(
\int_{k\delta}^{(k+1)\delta}
\int_{k\delta}^s
\eta^{\alpha,j}(s)
\eta^{\alpha,j'}(r)
dr ds
\right)^2
}
\\
&=
\E{
\left(
\int_{k\delta}^{(k+1)\delta}
\eta^{\alpha,j}(s)
\left( W^{j'}_s - W^{j'}_{k\delta} - \alpha^{-1}
\left( \eta^{\alpha,j'}(s) - \eta^{\alpha,j'}(k\delta) \right) \right)
ds
\right)^2
}
\\
&\lesssim
\delta
\int_{k\delta}^{(k+1)\delta}
\E{
\sup_{s \in [0,1]} |\eta^{\alpha,j'}(s)|^2
\left( |W^{j'}_s - W^{j'}_{k\delta}|^2 - \alpha^{-2}
\sup_{s \in [0,1]} |\eta^{\alpha,j'}(s)|^2 \right)
}
ds
\\
&\lesssim 
\delta^3 \alpha \log(1+\alpha)
+ 
\delta^2 \log(1+\alpha).
\end{align*}
As for the remaining term,
\begin{align*}
&\mathbb{E} \left[ 
\sup_{n = 1, \dots, 1/\delta}
|R_n| \right]
\\
&\quad\lesssim
\| \phi \|_{H^{2+\gamma}(\T^d)}
\| T_0 \|_{L^2(\T^d)} 
\sum_{j \in J}
\| \uu_j \|_{H^{d/2-\gamma}(\T^d)}
\sum_{j' \in J}
\| \uu_{j'} \|_{L^\infty(\T^d)}
\\
&\qquad \times
\sum_{k=1}^{1/\delta-1}
\sum_{j,j' \in J}
\E{
\left|  \mathbb{E} \left[ 
c_{j,j'}(k) \mid \mathcal{F}_{k\delta}\right] - \delta_{j,j'}\frac{\delta}{2}\right|
}
\\
&\quad\lesssim
\| \phi \|_{H^{2+\gamma}(\T^d)}
\| T_0 \|_{L^2(\T^d)}
\mu^2
\delta^{-1} \alpha^{-1} \log(1+\alpha),
\end{align*}
where we have used
\begin{align*}
\E{
\left|  \mathbb{E} \left[ 
c_{j,j'}(k) \mid \mathcal{F}_{k\delta}\right] - \delta_{j,j'}\frac{\delta}{2}\right|
}
\lesssim
\alpha^{-1} \log(1+\alpha).
\end{align*}

Putting all together, and recalling 
$
\sum_{k=m}^{n} I_{121}(k)
=
M_{n+1} + R_{n+1}
-M_m-R_m
$, 
we deduce the following: 
\begin{lemma} \label{lem:negligible_bis}
Let $d$, $\gamma$, $\delta$ and $\alpha$ as in \autoref{lem:negligible}.
Then
\begin{align*}
\E{
\sup_{\substack{n,m=1,\dots,1/\delta-1\\n\geq m}} \left|
\sum_{k=m}^{n} I_{121}(k)
\right| 
}
&\lesssim
\| \phi \|_{H^{2+\gamma}(\T^d)}
\| T_0 \|_{L^2(\T^d)}
\mu^2
\delta^{-1} \alpha^{-1} 
\log(1+\alpha) .
\end{align*}
\end{lemma}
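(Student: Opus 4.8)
The plan is to deduce the estimate immediately from the two bounds on $\sup_n|M_n|$ and $\sup_n|R_n|$ already obtained above, via the telescoping identity
\[
\sum_{k=m}^{n} I_{121}(k) = M_{n+1} + R_{n+1} - M_m - R_m,
\]
which holds for every $1 \leq m \leq n \leq 1/\delta - 1$ directly from the definitions of $M_n$ and $R_n$ (each partial sum of $I_{121}(k)$ over a block $k=m,\dots,n$ is the increment of $M+R$ between the indices $m$ and $n+1$).

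First I would pass from the double supremum to single suprema. Since all the indices appearing on the right-hand side lie in $\{1,\dots,1/\delta\}$, the triangle inequality gives the pathwise bound
\[
\sup_{\substack{n,m=1,\dots,1/\delta-1\\n\geq m}} \left| \sum_{k=m}^n I_{121}(k) \right| \leq 2 \sup_{n=1,\dots,1/\delta}|M_n| + 2\sup_{n=1,\dots,1/\delta}|R_n|.
\]
Taking expectations and plugging in the two estimates established above then yields
\begin{align*}
\E{\sup_{\substack{n,m=1,\dots,1/\delta-1\\n\geq m}} \left| \sum_{k=m}^n I_{121}(k) \right|}
&\lesssim \|\phi\|_{H^{2+\gamma}(\T^d)}\|T_0\|_{L^2(\T^d)}\mu^2
\\
&\quad \times \left(\delta\alpha^{1/2}\log^{1/2}(1+\alpha) + \delta^{-1}\alpha^{-1}\log(1+\alpha)\right).
\end{align*}

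The only genuine point left -- and the place where the specific relation between $\delta$ and $\alpha$ enters -- is to show that the martingale contribution $\delta\alpha^{1/2}\log^{1/2}(1+\alpha)$ is absorbed into the drift contribution $\delta^{-1}\alpha^{-1}\log(1+\alpha)$, so that the bracket collapses to the single claimed term. This reduces to checking $\delta^2\alpha^{3/2} \leq \log^{1/2}(1+\alpha)$, and here I would invoke the standing hypothesis $\mu\delta^{4/3}\alpha\log^{1/3}(1+\alpha) < 1$ of \autoref{lem:negligible}: since $\mu > 1$ this forces $\delta^{4/3}\alpha\log^{1/3}(1+\alpha) < 1$, hence (raising to the power $3/2$) $\delta^2\alpha^{3/2} < \log^{-1/2}(1+\alpha) \leq \log^{1/2}(1+\alpha)$ for $\alpha$ large. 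I expect this parameter comparison to be essentially the whole difficulty of the lemma: all the analytic and probabilistic work -- the Doob maximal inequality for the martingale $M_n$, the fact that conditional expectation is an $L^2(\Omega)$-contraction, and the moment bounds on $c_{j,j'}(k)$ coming from the explicit Ornstein--Uhlenbeck representation \eqref{eq:explicit_OU} -- has already been carried out in producing the two displayed estimates for $M_n$ and $R_n$.
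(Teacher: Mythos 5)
Your proposal is correct and follows exactly the paper's route: the paper's proof of this lemma consists precisely of the Doob/conditional-expectation estimates for $M_n$ and $R_n$ displayed just before the statement, combined via the telescoping identity $\sum_{k=m}^{n} I_{121}(k) = M_{n+1}+R_{n+1}-M_m-R_m$. The one step the paper leaves implicit --- absorbing the martingale contribution $\delta\alpha^{1/2}\log^{1/2}(1+\alpha)$ into $\delta^{-1}\alpha^{-1}\log(1+\alpha)$ via the standing condition $\mu\,\delta^{4/3}\alpha\log^{1/3}(1+\alpha)<1$ and $\mu>1$ --- is exactly the comparison you verify, and your verification is correct.
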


\subsection{Controlling the remaining terms}
Consider now the term $I_{14}(k)$ in \eqref{eq:incr02}:
\begin{align*}
I_{14}(k) 
&=
\int_{k\delta}^{(k+1)\delta}
\langle \uu^\alpha(s) \cdot \nabla \phi, 
T_{k\delta}  \rangle ds  
=
\left\langle
\left( \int_{k\delta}^{(k+1)\delta}
\uu^\alpha(s) ds \right) \cdot \nabla \phi, 
T_{k\delta}  \right\rangle .
\end{align*}
Recall $\uu^\alpha(s) ds = \sum_{j \in J} \uu_j \eta^{\alpha,j}(s) ds = \sum_{j\in J} \uu_j dW^j_s - \alpha^{-1} \sum_{j \in J} \uu_j d\eta^{\alpha,j}(s)$, and thus
\begin{align*}
I_{14}(k) 
&=
\sum_{j \in J} 
\langle \uu_j \cdot \nabla \phi, 
T_{k\delta}  \rangle 
\left( 
W^j_{(k+1)\delta}-
W^j_{k\delta}
\right)
\\
&\quad
-
\sum_{j \in J} 
\langle \uu_j \cdot \nabla \phi, 
T_{k\delta}  \rangle
\alpha^{-1} 
\left( 
\eta^{\alpha,j}((k+1)\delta)-
\eta^{\alpha,j}(k\delta) \right)
\\
&=
\sum_{j \in J} 
\langle \uu_j \cdot \nabla \phi, 
T_{k\delta}  \rangle 
\left( 
W^j_{(k+1)\delta}-
W^j_{k\delta}
\right)
-
\sum_{j \in J} 
\int_{k\delta}^{(k+1)\delta}
\langle \uu_j \cdot\nabla \phi, 
T_{s} \rangle  dW^j_s
\\
&\quad
+
\sum_{j \in J} \int_{k\delta}^{(k+1)\delta}
\langle \uu_j \cdot \nabla \phi, 
T_{s} \rangle  dW^j_s
\\
&\quad
-
\sum_{j \in J} 
\langle \uu_j \cdot \nabla \phi,  
T_{k\delta}  \rangle
\alpha^{-1} 
\left( 
\eta^{\alpha,j}((k+1)\delta)-
\eta^{\alpha,j}(k\delta) \right)
\\
&=
\sum_{j \in J} 
\int_{k\delta}^{(k+1)\delta}
\langle \uu_j \cdot \nabla \phi, 
T_{k\delta}- T_{s} \rangle  dW^j_s
\\
&\quad
+
\sum_{j \in J} \int_{k\delta}^{(k+1)\delta}
\langle \uu_j \cdot \nabla \phi, 
T_{s} \rangle  dW^j_s
\\
&\quad
-
\sum_{j \in J} 
\langle \uu_j \cdot \nabla \phi, 
T_{k\delta}  \rangle
\alpha^{-1} 
\left( 
\eta^{\alpha,j}((k+1)\delta)-
\eta^{\alpha,j}(k\delta) \right)
\\
&=
I_{141}(k)+I_{142}(k)+I_{143}(k).
\end{align*}

We have the forthcoming:
\begin{lemma} \label{lem:negligible_tris}
Let $d$, $\gamma$, $\delta$ and $\alpha$ as in \autoref{lem:negligible} and denote $\theta = \frac{1+\gamma}{1+2\gamma}$.
Then
\begin{align*}
\E{
\sup_{\substack{n,m=1,\dots,1/\delta-1\\n\geq m}} \left|
\sum_{k=m}^{n} I_{141}(k)
\right| 
}
&\lesssim
\|\phi\|_{H^{2+\gamma}(\T^d)}
\|T_0\|_{L^2(\T^d)}
\mu^2 \delta \alpha^{1/2}\log^{1/2}(1+\alpha);
\\
\E{
\sup_{\substack{n,m=1,\dots,1/\delta-1\\n\geq m}} \left|
\sum_{k=m}^{n} I_{143}(k)
\right| 
}
&\lesssim
\|\phi\|_{H^{2+2\gamma}(\T^d)}
\|T_0\|_{L^2(\T^d)} 
\mu^{2}
\\
&\quad \times
\left(
\delta^{(\theta-1)/2}
\alpha^{(\theta-1)/2}
\log^{(1+\theta)/2}(1+\alpha)
+
\delta^{\theta-1}
\alpha^{\theta-1}
\log(1+\alpha)
\right)
.
\end{align*}
\end{lemma}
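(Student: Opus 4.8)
The plan is to treat the two sums separately, exploiting that $\sum_{k=m}^n I_{141}(k)$ has a martingale structure whereas $\sum_{k=m}^n I_{143}(k)$ does not. In both cases I first reduce the double supremum to a single one: writing $S_n^{\bullet}=\sum_{k=1}^{n}I_{\bullet}(k)$ we have $\sum_{k=m}^n I_{\bullet}(k)=S_n^\bullet-S_{m-1}^\bullet$, whence $\sup_{n\geq m}|\sum_{k=m}^n I_\bullet(k)|\leq 2\sup_n|S_n^\bullet|$, so it suffices to bound $\E{\sup_n|S_n^\bullet|}$. For $I_{141}$, the integrand $\langle\uu_j\cdot\nabla\phi,T_{k\delta}-T_s\rangle$ is adapted on $[k\delta,(k+1)\delta]$, so $S_n^{141}$ is the continuous martingale $N_t=\sum_{j\in J}\int_0^t\langle\uu_j\cdot\nabla\phi,T_{\lfloor s/\delta\rfloor\delta}-T_s\rangle\,dW^j_s$ sampled on the grid. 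By Doob's maximal inequality and Burkholder--Davis--Gundy, $\E{\sup_n|S_n^{141}|}\lesssim\E{\langle N\rangle_1}^{1/2}$, and the It\=o isometry (cross terms $j\neq j'$ vanishing by independence of the $W^j$) together with \autoref{lem:prod_bess} applied with $s_1=d/2-\gamma$, which puts $\uu_j\cdot\nabla\phi\in H^{1}(\T^d)$ at the cost of $\phi\in H^{2+\gamma}(\T^d)$, gives
\[
\E{\langle N\rangle_1}\lesssim\|\phi\|_{H^{2+\gamma}(\T^d)}^2\Big(\sum_{j\in J}\|\uu_j\|_{H^{d/2-\gamma}(\T^d)}^2\Big)\int_0^1\E{\|T_{\lfloor s/\delta\rfloor\delta}-T_s\|_{H^{-1}(\T^d)}^2}\,ds.
\]
Bounding $\sum_j\|\uu_j\|_{H^{d/2-\gamma}}^2\leq\mu^2$ and invoking \autoref{lem:increments_T} with $p=2$, $\delta_\star=\delta$ (each squared increment being $\lesssim\|T_0\|_{L^2}^2\mu^2\delta^2\alpha\log(1+\alpha)$), the square root produces exactly the asserted bound for $I_{141}$.

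The term $I_{143}$ is the delicate one, since the increments $\eta^{\alpha,j}((k+1)\delta)-\eta^{\alpha,j}(k\delta)$ are not martingale increments. Here I would use Abel summation in $k$: with $a_k^j=\langle\uu_j\cdot\nabla\phi,T_{k\delta}\rangle$ and $b_k^j=\eta^{\alpha,j}(k\delta)$,
\[
\sum_{k=m}^n a_k^j(b_{k+1}^j-b_k^j)=a_n^j b_{n+1}^j-a_m^j b_m^j-\sum_{k=m+1}^n(a_k^j-a_{k-1}^j)b_k^j.
\]
The two boundary terms are bounded by $\alpha^{-1}\sum_j\|\uu_j\|_{L^\infty}\|\phi\|_{H^1}\|T_0\|_{L^2}\,\E{\sup_s|\eta^{\alpha,j}(s)|}$; by \eqref{eq:sup_OU} and $\sum_j\|\uu_j\|_{L^\infty}\leq\mu$ these are $\lesssim\mu\|T_0\|_{L^2}\alpha^{-1/2}\log^{1/2}(1+\alpha)$, which is dominated by the claimed right-hand side (recall $\mu>1$, $\theta<1$, $\delta<1$, so $\alpha^{(\theta-1)/2}\geq\alpha^{-1/2}$ and $\delta^{(\theta-1)/2}\geq1$). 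The bulk term $\alpha^{-1}\sum_j\sum_k(a_k^j-a_{k-1}^j)b_k^j$ is where the exponent $\theta=\frac{1+\gamma}{1+2\gamma}$ is forced upon us. The idea is to estimate $|a_k^j-a_{k-1}^j|=|\langle\uu_j\cdot\nabla\phi,T_{k\delta}-T_{(k-1)\delta}\rangle|\lesssim\|\uu_j\|_{H^{d/2-\gamma}}\|\phi\|_{H^{2+2\gamma}}\|T_{k\delta}-T_{(k-1)\delta}\|_{H^{-s}}$, using \autoref{lem:prod_bess} to place $\uu_j\cdot\nabla\phi$ in $H^s(\T^d)$ for a suitable $s\in(1,1+\gamma)$ (the regularity budget $\phi\in H^{2+2\gamma}$ being exactly enough, since it forces $\nabla\phi\in H^{s+\gamma}$; the product-rule hypotheses $d/2-\gamma<d/2$ and $s+\gamma<d/2$ hold precisely because $\gamma\in(0,(d-2)/6)$), and then to interpolate $\|T_{k\delta}-T_{(k-1)\delta}\|_{H^{-s}}$ via \eqref{eq:interp} between $\|\cdot\|_{H^{-1}}$ (controlled by \autoref{lem:increments_T}) and $\|\cdot\|_{H^{-2-\gamma}}$ (controlled by \autoref{lem:increments_T_bis}), with interpolation weight precisely $\theta$ on the $H^{-1}$ side.

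To close the bulk estimate I would bound $\sup_{n\geq m}|\sum_k\cdots|$ by the full sum of moduli, factor out $\sup_k|b_k^j|\leq\sup_s|\eta^{\alpha,j}(s)|$, and apply Cauchy--Schwarz together with \eqref{eq:sup_OU} and Minkowski/H\"older across the $1/\delta$ summands. The $H^{-2-\gamma}$ increment estimate of \autoref{lem:increments_T_bis} contributes two pieces, $\delta^{1/2}$ and $\alpha^{-1/2}\log^{1/2}(1+\alpha)$; combining each with the $H^{-1}$ scaling $\delta\alpha^{1/2}\log^{1/2}(1+\alpha)$ to the powers $\theta$ and $1-\theta$, then carrying the prefactor $\alpha^{-1}$, the gain $\alpha^{1/2}\log^{1/2}(1+\alpha)$ from $\sup|\eta|$, and the loss $\delta^{-1}$ from summing $1/\delta$ terms, the $\delta^{1/2}$-piece yields the first asserted term $\delta^{(\theta-1)/2}\alpha^{(\theta-1)/2}\log^{(1+\theta)/2}(1+\alpha)$ and the $\alpha^{-1/2}$-piece the second term $\delta^{\theta-1}\alpha^{\theta-1}\log(1+\alpha)$, each carrying the overall factor $\mu^2$ (one power from $\sum_j\|\uu_j\|_{H^{d/2-\gamma}}\leq\mu$ and one from the velocity amplitude inside the increment estimates). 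I expect the main obstacle to be exactly this interpolation bookkeeping: one must verify that the single choice $\theta=\frac{1+\gamma}{1+2\gamma}$ simultaneously keeps the regularity demand on $\phi$ within $H^{2+2\gamma}(\T^d)$ and makes the two resulting powers of $(\delta\alpha)$ collapse to the stated exponents $(\theta-1)/2$ and $\theta-1$ — checking admissibility of this weight under the constraint $\gamma\in(0,(d-2)/6)$ is the crux of the argument, while the handling of $I_{141}$ and of the boundary terms is routine.
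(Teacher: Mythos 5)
Your proposal follows essentially the same route as the paper's proof: Burkholder--Davis--Gundy together with \autoref{lem:prod_bess} and \autoref{lem:increments_T} for $I_{141}$, and Abel summation for $I_{143}$ with the boundary terms absorbed via \eqref{eq:sup_OU} and the bulk term handled by interpolating the increment $T_{k\delta}-T_{(k-1)\delta}$ with weight $\theta=\frac{1+\gamma}{1+2\gamma}$ between the $H^{-1}$ bound of \autoref{lem:increments_T} and the lower-order bound of \autoref{lem:increments_T_bis}. The only cosmetic difference is that the paper interpolates between $H^{-1}$ and $H^{-2-2\gamma}$ so as to land exactly at $H^{-1-\gamma}$ (applying \autoref{lem:increments_T_bis} with $2\gamma$ in place of $\gamma$), whereas you interpolate against $H^{-2-\gamma}$ and land at a slightly different intermediate exponent; both choices require $\phi\in H^{2+2\gamma}(\T^d)$ and produce identical rates in $\delta$ and $\alpha$.
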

\begin{proof}
Concerning the term $I_{141}(k)$, we have
\begin{align*}
\sum_{k=m}^{n} I_{141}(k)
&=
\sum_{j \in J} 
\int_{m\delta}^{(n+1)\delta}
\langle \uu_j \cdot \nabla \phi, 
T_{[s]}- T_{s} \rangle  dW^j_s,
\end{align*}
where we denote by $[s]$ the largest multiple of $\delta$ smaller than $s$.
Therefore by Burkholder-Davis-Gundy inequality and \autoref{lem:increments_T},
\begin{align*}
\E{ \sup_{\substack{n,m=1,\dots,1/\delta-1\\n\geq m}}
\left| \sum_{k=m}^{n} I_{141}(k)\right|}
&\lesssim
\E{ \sup_{t \in [0,1]}
\left|
\sum_{j \in J} 
\int_{0}^{t}
\langle \uu_j \cdot \nabla \phi, 
T_{[s]}- T_{s} \rangle  dW^j_s \right|}
\\
&\lesssim
\E{ 
\left|
\sum_{j \in J} 
\int_0^1
\langle \uu_j  \cdot \nabla \phi, 
T_{[s]}- T_{s} \rangle^2  ds \right|^{1/2}}
\\
&\lesssim
\E{ 
\left|  
\int_0^1
\sum_{j \in J}\| \uu_j \|_{H^{d/2-\gamma}(\T^d)}^2
\|\phi\|_{H^{2+\gamma}(\T^d)}^2 
\|T_{[s]}- T_{s}\|_{H^{-1}(\T^d)}^2  ds \right|^{1/2}}
\\
&\lesssim
\|\phi\|_{H^{2+\gamma}(\T^d)}
\|T_0\|_{L^2(\T^d)}
\mu^2 \delta \alpha^{1/2}\log^{1/2}(1+\alpha) .
\end{align*}

Let us move now to $I_{143}(k)$.
Since the increments of $\eta^{\alpha,j}$ are difficult to control, we perform a discrete integration by parts to get
\begin{align*}
\sum_{k=m}^{n} I_{143}(k)
&=
-
\sum_{k=m}^{n}
\sum_{j \in J} 
\langle \uu_j \cdot \nabla \phi, 
T_{k\delta}  \rangle
\alpha^{-1} 
\left( 
\eta^{\alpha,j}((k+1)\delta)-
\eta^{\alpha,j}(k\delta) \right)
\\
&=
-
\alpha^{-1}
\sum_{k=m}^{n}
\langle \left( 
\uu^\alpha((k+1)\delta)-
\uu^\alpha(k\delta) \right) \cdot \nabla \phi, 
T_{k\delta}  \rangle
\\
&=
\alpha^{-1}
\sum_{k=m+1}^{n} 
\langle \uu^\alpha(k\delta) \cdot \nabla \phi, 
(T_{k\delta}-T_{(k-1)\delta})  \rangle
\\
&\quad
-
\alpha^{-1}
\sum_{j \in J} 
\langle \uu^\alpha(m\delta) \cdot \nabla \phi,  
T_{m\delta}  \rangle 
\\
&\quad
-
\alpha^{-1}
\langle \uu^\alpha ((n+1)\delta) \cdot \nabla \phi, 
T_{n\delta}  \rangle. 
\end{align*}
By the usual estimates, taking $\gamma\in(0,(d-2)/4) $ we have
\begin{align*}
\left|\sum_{k=m}^{n} I_{143}(k) \right|
&\lesssim
\alpha^{-1}
\|\phi\|_{H^{2+2\gamma}(\T^d)}
\sup_{s \in [0,1]}
\|\uu^\alpha(s)\|_{H^{d/2-\gamma}(\T^d)}
\\
&\quad \times 
\left(
\|T_0\|_{L^2(\T^d)}
+
\sum_{k=m+1}^{n}
\|T_{k\delta}-T_{(k-1)\delta}\|_{H^{-1-\gamma}(\T^d)}
\right)
\\
&\eqqcolon \sum_{k=m}^{n} I'_{143}(k).
\end{align*}
Interpolation inequality \eqref{eq:interp} with $\theta = \frac{1+\gamma}{1+2\gamma}$ gives:
\begin{align*}
\|T_{k\delta}-T_{(k-1)\delta}\|_{H^{-1-\gamma}(\T^d)}
\leq
\|T_{k\delta}-T_{(k-1)\delta}\|_{H^{-1}(\T^d)}
^\theta
\|T_{k\delta}-T_{(k-1)\delta}\|_{H^{-2-2\gamma}(\T^d)}
^{1-\theta},
\end{align*}
and therefore by \autoref{lem:increments_T} and \autoref{lem:increments_T_bis}:
\begin{align*}
\E{
\sup_{\substack{n,m=1,\dots,1/\delta-1\\n\geq m}}
\left|\sum_{k=m}^{n} I_{143}(k) \right|
}
&\leq
\E{
\sum_{k=1}^{1/\delta-1} I'_{143}(k) 
}
\\
&\lesssim
\|\phi\|_{H^{2+2\gamma}(\T^d)}
\|T_0\|_{L^2(\T^d)} 
\mu^{2}
\\
&\quad \times
\left(
\delta^{(\theta-1)/2}
\alpha^{(\theta-1)/2}
\log^{(1+\theta)/2}(1+\alpha)
+
\delta^{\theta-1}
\alpha^{\theta-1}
\log(1+\alpha)
\right)
.
\end{align*}

\end{proof}

\subsection{Proof of \autoref{prop:discr_holder}}

In this paragraph we are going to prove \autoref{prop:discr_holder}. We want to show
\begin{align*}
&\E{
\sup_{\substack{n,m=1,\dots,1/\delta-1\\n>m}}
(|n-m|\delta)^{-\theta}
\left| 
\langle \phi, T_{k\delta} \rangle
-
\langle \phi, T_{m\delta} \rangle
-
\delta \sum_{k=m}^{n-1}
\langle A \phi, T_{k\delta} \rangle 
\right|}
\\
&\quad\lesssim
\| \phi \|_{H^{\beta}(\T^d)}
\| T_0 \|_{L^2(\T^d)}
\left( \epsilon + \frac{\mu^{2+\gamma}}{\alpha^{\varkappa}} 	\right)
,
\end{align*}
for some $\theta, \varkappa$ sufficiently small and $\alpha, \beta, \gamma, \delta$ as in the statement of the proposition.
Let us preliminarily discuss the condition on $\delta$. 
First, in order to apply \autoref{lem:increments_T_bis}, the parameter $\delta$ must be chosen depending on $\alpha$ (and $\mu$) so that
\begin{align} \label{eq:condition_da1}
\delta\alpha \log(1+\alpha)> 1,
\quad
\mu \delta^{4/3} \alpha \log^{1/3}(1+\alpha)< 1.
\end{align}
Moreover, recall the following decomposition \eqref{eq:incr02} 
\begin{align*}
\langle \phi, T_{n\delta} \rangle
&-
\langle \phi, T_{m\delta} \rangle
-
\delta \sum_{k=m}^{n-1}
\langle A \phi, T_{k\delta} \rangle
\\
&=
\sum_{k=m}^{n-1}
\left( 
I_{11}(k) + I_{121}(k) + I_{13}(k) + I_{141}(k) + I_{142}(k) + I_{143}(k) + I_{21}(k)
\right). \nonumber
\end{align*}
We are assuming $1/\delta$ to be an integer, so that the previous decomposition is well-calibrated -- the interval $[0,1]$ is split exactly into $1/\delta$ subintervals of length $\delta$.

To simplify the notation, write $I(k)$ as an abbreviation for $
|I_{11}(k)| + |I_{12}(k)| + |I_{13}(k)| + |I_{141}(k)| + |I_{143}(k)| + |I_{21}(k)|$. The only term remaining is that involving $|I_{142}(k)|$, that will be treated separately.
We shall prove next that for every $\theta$ sufficiently small there exists $\varkappa>0$ such that
\begin{align} \label{eq:aux_varkappa}
&\E{
\sup_{\substack{n,m=1,\dots,1/\delta-1\\n>m}}
(|n-m|\delta)^{-\theta}
\sum_{k=m}^{n-1}
I(k)
}
\lesssim
\| \phi \|_{H^{2+\gamma}(\T^d)}
\| T_0 \|_{L^2(\T^d)}
\frac{\mu^{2+\gamma}}{\alpha^{\varkappa}}.
\end{align}
Invoking \autoref{lem:negligible}, and in particular the estimate for the term involving $I_{11}$, one immediately realizes that for the previous estimate to be true it is necessary that
\begin{align} \label{eq:condition_da2}
\delta^{1+\gamma-\theta} \alpha^{1+\gamma/2+\varkappa} \log^{1+\gamma/2}(1+\alpha) < 1.
\end{align}

Then, once \eqref{eq:condition_da1} and \eqref{eq:condition_da2} are both satisfied, from \autoref{lem:negligible}, \autoref{lem:negligible_bis} and \autoref{lem:negligible_tris} we deduce \eqref{eq:aux_varkappa} (possibly taking smaller $\theta$ and $\varkappa$ if needed).
Let $\theta$, $\varkappa$ such that
\begin{align*}
1 < \frac{1+\gamma-\theta}{1+\gamma/2+\varkappa},
\end{align*}
which is always possible taking $\theta$ and $\varkappa$ sufficiently small. Then $\delta$ is chosen by
\begin{align*}
\delta = c_1 \alpha^{-c_2},
\quad
\max \left\{\frac{4}{5} , \frac{1+\gamma/2+\varkappa}{1+\gamma-\theta}\right\} < c_2 < 1,
\end{align*}
and $c_1$ is an auxiliary constant such that $1/\delta$ is an integer. 
With such a choice of $\delta$, conditions \eqref{eq:condition_da1} and \eqref{eq:condition_da2} hold true for large $\alpha$, since logarithmic factors become negligible when compared with powers of $\alpha$.

Let us move next to the term $\sum_{k=m}^{n-1}I_{142}(k)$, given by
\begin{align*}
\sum_{k=m}^{n-1}I_{142}(k)
=
\sum_{j \in J} \int_{m\delta}^{n\delta} 
\langle \uu_j \cdot \nabla \phi, T_r \rangle
dW^j_r.
\end{align*}

Since by Sobolev embedding Theorem $\| \nabla \phi \|_{L^\infty(\T^d)} \lesssim \|\phi\|_{H^\beta(\T^d)}$ for every $\beta > d/2+1$, Burkholder-Davis-Gundy inequality gives
\begin{align*}
\E{
\left|
\sum_{j \in J} \int_s^t 
\langle \uu_j \cdot \nabla \phi, T_r \rangle
dW^j_r \right|^3
}
&\lesssim
\E{
\left|
\sum_{j \in J} \int_s^t 
|\langle \uu_j \cdot \nabla \phi, T_r \rangle|^2
dr \right|^{3/2}
}
\\
&\lesssim 
|t-s|^{3/2} 
\epsilon^3
\| \phi \|_{H^{\beta}(\T^d)}^3 
\| T_0 \|_{L^2(D)}^3.
\end{align*}
Therefore, by Kolmogorov continuity criterion the stochastic integral in the expression above is $\theta$-H\"older continuous for every $\theta < 1/6$, and its H\"older constant $K_\theta$ satisfies
\begin{align*}
\E{
\sup_{0<s<t<1}
\frac{\left| \sum_{j \in J} \int_s^t 
\langle \uu_j \cdot \nabla \phi, T_r \rangle
dW^j_r \right|}{|t-s|^\theta}
}
=
\E{K_\theta}
\lesssim
\epsilon
\| \phi \|_{H^{\beta}(\T^d)}
 \| T_0 \|_{L^2(\T^d)}.
\end{align*}
The proof is complete.

\bibliographystyle{plain}

\end{document}